\tikzstyle arrowstyle=[scale=1]
\tikzstyle directed=[postaction={decorate,decoration={markings, mark=at position 0.75 with {\arrow[arrowstyle]{stealth}}}}]
\tikzstyle redirected=[postaction={decorate,decoration={markings, mark=at position 0.35 with {\arrow[arrowstyle]{stealth}}}}]
\newtheorem{theorem}{Theorem}[section]
\newtheorem{corollary}[theorem]{Corollary}
\newtheorem{definition}[theorem]{Definition}
\newtheorem{lemma}[theorem]{Lemma}
\newtheorem{claim}{Claim}[theorem]
\DeclareMathOperator{\supp}{{\rm supp}}
\DeclareMathOperator{\Z}{\mathbb{Z}}
\newcommand{\JCTB}{{\it J. Combin. Theory Ser. B}}
\newcommand{\JGT}{{\it J. Graph Theory}}
\newcommand{\DM}{{\it Discrete Math.}}
\newcommand{\DAM}{{\it Discrete Appl. Math.}}
\newcommand{\SIAMDM}{{\it SIAM J. Discrete Math.}}
\newcommand{\CJM}{{\it Canad. J. Math.}}
\newcommand{\JLMS}{{\it J. London Math. Soc.}}
\newcommand{\EJC}{{\it European J. Combin.}}
\begin{document}

\title{Flows of   3-edge-colorable  cubic  signed graphs}
\author{\small Liangchen Li\thanks{School of Mathematical Sciences, Luoyang Normal University, Luoyang  471934, China. Email: liangchen\_li@163.com. Partially supported by NSFC (No. 11301254),  Basic Research Foundation of Henan Educational Committee (No. 20ZX004), and China Scholarship Council.},
Chong Li\thanks{Department of Mathematics, West Virginia University, Morgantown, WV 26505, USA.  Email:~ cl0081@mix.wvu.edu}, Rong Luo\thanks{Department of Mathematics, West Virginia University, Morgantown, WV 26505, USA.  Email:~ rluo@mail.wvu.edu. Partially supported by a grant from  Simons Foundation (No. 839830)},  Cun-Quan Zhang\thanks{Department of Mathematics, West Virginia University, Morgantown, WV 26505, USA.  Email:~cqzhang@mail.wvu.edu.   Partially supported
 by an  NSF grant DMS-1700218}~ and Hailiang Zhang\thanks{Department of Mathematics, Taizhou University, Taizhou 317000, China. Email: hdsdzhl@163.com}
 }
\date{}
\maketitle
\begin{abstract}
 Bouchet conjectured  in 1983 that every flow-admissible signed graph admits a nowhere-zero 6-flow which is equivalent to the restriction to cubic signed graphs.   In this paper, we proved that every flow-admissible $3$-edge-colorable cubic signed graph admits a nowhere-zero $10$-flow.  This together with the 4-color theorem implies that every  flow-admissible bridgeless  planar  signed graph admits a nowhere-zero $10$-flow. As a byproduct, we also show that every flow-admissible hamiltonian signed graph  admits a nowhere-zero $8$-flow.
\end{abstract}
Keywords: Signed graph; nowhere-zero flow; 3-edge-colorable; hamiltonian circuit.

\section{Introduction}
In 1983, Bouchet~\cite{Bouchet1983} proposed a flow conjecture that
{\em every flow-admissible signed graph admits a nowhere-zero $6$-flow}.
 Bouchet~\cite{Bouchet1983} himself proved that
such signed graphs admit  nowhere-zero $216$-flows and Z\'yka~\cite{Zyka1987}
further reduced to $30$-flows. Recently DeVos et al. \cite{DLLLZZ} further proved  that flow-admissible signed graphs admit  nowhere-zero $11$-flows.

Similar to  ordinary graphs, Bouchet's conjecture is equivalent to the restriction to  cubic signed graphs: {\it every flow-admissible cubic  signed graph admits a nowhere-zero $6$-flow.}

 Schubert and Steffen \cite{Schubert2015} verified Bouchet's Conjecture for Kotzig graphs. M\'a\v cajov\'a  and \v{S}koviera \cite{Mac-steffen2015} characterized  cubic signed graphs that admit a nowhere-zero $3$-flow and that admit a nowhere-zero $4$-flow, respectively.
 In this paper, we investigate integer flows in  $3$-edge-colorable cubic signed  graphs and prove the following theorem.

\begin{theorem}
\label{THM-10-flow}
Every flow-admissible  $3$-edge-colorable cubic  signed graph  admits a nowhere-zero $10$-flow.
\end{theorem}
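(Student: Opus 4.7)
The natural starting point is a proper 3-edge-coloring of the underlying cubic graph, giving three perfect matchings $M_1, M_2, M_3$. The 2-factor $H := M_1 \cup M_2$ decomposes into pairwise vertex-disjoint even circuits $C_1, \ldots, C_t$, while the remaining matching $M_3$ supplies exactly one ``exit edge'' at each vertex of $G$. The plan is to classify the circuits $C_i$ by the parity of their number of negative edges: \emph{balanced} circuits (even parity) carry a nowhere-zero integer $2$-flow on their own, whereas \emph{unbalanced} circuits (odd parity) do not admit any flow in isolation. Thus the first objective is to isolate the unbalanced circuits and show that they can be joined up via $M_3$ into flow-admissible configurations.

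To do this, I would contract each circuit $C_i$ of $H$ to a single vertex $\bar v_i$, obtaining a signed multigraph $\bar G$ whose edges are in bijection with $M_3$ and whose vertices inherit a parity from the balanced/unbalanced status of $C_i$. Since every vertex of $C_i$ contributes exactly one $M_3$-edge, the vertex $\bar v_i$ has degree $|V(C_i)|$, which is even, so $\bar G$ is Eulerian. Using the flow-admissibility of $G$, the goal is to argue that the ``odd'' vertices of $\bar G$ (those coming from unbalanced circuits) can be simultaneously handled via a nowhere-zero integer $k$-flow on $\bar G$ for small $k$, say $k \le 5$, by invoking the classical 5-flow theorem on the (Eulerian) ordinary multigraph underlying $\bar G$ together with a parity correction that guarantees the flow value is odd on $M_3$-edges incident to odd vertices.

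With such a flow in hand, I would combine three ingredients to define a nowhere-zero flow on $G$: (i) a $2$-flow of values $\pm 2$ on balanced components of $H$; (ii) a carefully chosen flow on unbalanced components of $H$, whose net imbalance at each vertex is absorbed by the flow on the $M_3$-edge incident there; and (iii) the lift of the $\Z_5$-flow on $\bar G$ to assign values on $M_3$. Tracking the contributions, the resulting flow takes values in $\{\pm 1, \pm 2, \ldots, \pm 9\}$ and so is a nowhere-zero $10$-flow on $G$.

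The main obstacle is controlling the three contributions at a common vertex: at each $v \in V(G)$, flow conservation is a single signed linear equation involving two $H$-edges and one $M_3$-edge, and these equations are coupled across unbalanced circuits by the $M_3$-edges joining them. Choosing the flow values so that (a) every vertex's signed conservation equation holds and (b) no edge's flow collapses to $0$ is the crux of the argument; this is essentially why one cannot hope for a small constant like $4$ or $5$ but expects to land near $10 = 2 \cdot 5$. A secondary difficulty is showing that flow-admissibility of $G$ translates into the correct structural condition on $\bar G$ for the auxiliary $\Z_5$-flow to exist, which will likely require a short case analysis according to how the unbalanced circuits are distributed among, and linked by, the edges of $M_3$.
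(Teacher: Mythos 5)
Your opening move (fix a proper $3$-edge-coloring, split $G$ into the $2$-factor $H=M_1\cup M_2$ and the matching $M_3$, and sort the circuits of $H$ by balance) matches the paper's, but the core of your argument does not go through as sketched. The auxiliary step is misdirected: the ordinary multigraph underlying $\bar G$ has all degrees even (the circuits of $M_1\cup M_2$ have even length), so it trivially admits a nowhere-zero $2$-flow, and there is no content in invoking a ``classical $5$-flow theorem'' there (Tutte's $5$-flow conjecture is open in any case). The real difficulty is not finding an ordinary flow on $\bar G$ but satisfying the signed conservation constraints at the unbalanced circuits, and your proposed parity correction (odd flow values on $M_3$-edges incident to odd vertices) is neither justified nor the right condition. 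The actual mechanism pairs two unbalanced circuits by a path carrying the even value $\pm 2$ while each circuit carries $\pm 1$ (a barbell); this requires the unbalanced circuits of the relevant $2$-factor to be matched up in pairs. That works when their number is even (the paper's Lemma~\ref{TH: 2-to-3}); when it is odd one needs a genuinely harder statement producing a $5$-flow (the paper's Lemma~\ref{flow of g-barbell}), and your sketch contains no substitute for it.

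You also never engage with the case that is both the hardest and the only place where flow-admissibility is actually used: a $2$-factor containing exactly one unbalanced circuit $C$. That circuit cannot be paired with anything; if $G-E(C)$ contains another unbalanced circuit one builds a long barbell, but if $G-E(C)$ is balanced one must cover $C$ by a $4$-flow assembled from balanced circuits each meeting $C$ in a segment, which the paper does via a minimal cosegment cover (Lemma~\ref{HC-cover}) resting on Bouchet's characterization of flow-admissibility. Finally, note that the paper does not work with one $2$-factor plus a matching: it chooses the two colour classes $R,B$ with $|R\cap E_N(G,\sigma)|\equiv|B\cap E_N(G,\sigma)|\pmod 2$ (always possible by pigeonhole), so that the $2$-factor $RB$ has an even number of unbalanced circuits, and then superposes a flow supported on $RB$ with one supported on the second $2$-factor $RY$ (e.g.\ $f_1+3f_2$ or $5f_3+f_6$). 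Using two overlapping $2$-factors, rather than $H$ and $M_3$, is what makes the values combine into an $8$- or $10$-flow; your single-$2$-factor scheme still lacks a worked-out rule for the $M_3$ values, which is precisely the crux you identify but do not resolve.
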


By the $4$-color theorem, every bridgeless cubic planar graph is $3$-edge-colorable.  Therefore we have the following corollary for bridgeless signed planar graphs.
\begin{corollary}
\label{COR}
Every flow-admissible  bridgeless planar  signed graph admits a nowhere-zero $10$-flow.
\end{corollary}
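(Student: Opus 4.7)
The plan is to derive Corollary~\ref{COR} from Theorem~\ref{THM-10-flow} by reducing an arbitrary bridgeless planar signed graph to the cubic case, using the $4$-color theorem to supply the needed $3$-edge-colorability.

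Given a flow-admissible bridgeless planar signed graph $(G,\sigma)$, fix a planar embedding and construct an auxiliary signed graph $(G^\ast,\sigma^\ast)$ as follows. Replace every vertex $v$ of degree $d_v\ge 4$ by a facial cycle $C_v$ of length $d_v$ drawn inside a small neighborhood of $v$: the $d_v$ edges originally incident to $v$ are redirected to the $d_v$ distinct vertices of $C_v$ in the cyclic order given by the embedding, and every new edge of $C_v$ is declared positive. Suppress any vertex of degree two, merging the two incident edges into a single edge whose sign is the product of the two original signs. The resulting graph $G^\ast$ is planar and cubic; it is bridgeless because the original graph was bridgeless and every gadget cycle $C_v$ supplies two edge-disjoint paths between any pair of its boundary vertices. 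Flow-admissibility also survives: each new (positive) edge lies on the balanced cycle $C_v$, while any signed circuit of $G$ that passes through $v$ can be rerouted through the appropriate arc of $C_v$, producing a signed circuit of $G^\ast$ of the same parity since the chords used are all positive.

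Now apply Tait's classical formulation of the $4$-color theorem: the bridgeless cubic planar graph $G^\ast$ is $3$-edge-colorable, so Theorem~\ref{THM-10-flow} yields a nowhere-zero $10$-flow $\phi^\ast$ on $(G^\ast,\sigma^\ast)$. Contracting the positive gadget edges and undoing the degree-two suppressions converts $(G^\ast,\sigma^\ast)$ back into $(G,\sigma)$, and since contracting a positive edge of a signed graph preserves nowhere-zero flows (the conservation law at the merged vertex is simply the sum of the two original laws), the restriction of $\phi^\ast$ to the edges of $G$ is a nowhere-zero $10$-flow on $(G,\sigma)$.

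The step requiring the most care is the flow-admissibility check for $(G^\ast,\sigma^\ast)$: an arbitrary cubic reduction could separate a balanced subgraph across a newly created cut and so destroy flow-admissibility, which is precisely why the gadgets must be chosen to use only positive edges and to follow the embedded cyclic order. Everything else is a routine transfer along the reduction, so this is the only point where a careful, case-free argument is needed before Theorem~\ref{THM-10-flow} can be invoked.
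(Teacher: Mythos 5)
Your proposal is correct and follows essentially the same route as the paper: blow up each vertex of degree at least four into an all-positive cycle respecting the planar rotation, invoke Tait's form of the $4$-color theorem together with Theorem~\ref{THM-10-flow} on the resulting bridgeless cubic planar signed graph, and contract the positive gadget edges back. The only place where the paper is more explicit is the treatment of negative loops (it replaces each by an unbalanced digon on a subdivided gadget edge), whereas your construction handles them implicitly by sending the two ends of a loop to two distinct vertices of $C_v$; this works, but you should say clearly that what gets redirected are the $d_v$ half-edges at $v$ rather than ``edges,'' since a loop is a single edge with both ends at $v$.
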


Theorem~\ref{THM-10-flow}  follows from the following  stronger result which shows that every  connected  flow-admissible $3$-edge-colorable  cubic signed graph admits a nowhere-zero $8$-flow except one case which has a nowhere-zero $10$-flow.
\begin{theorem}
\label{THM-10-flow2}
Let $(G,\sigma)$ be a  connected  $3$-edge-colorable  cubic signed graph and $E_N(G,\sigma)$ be the set of negative edges in $(G, \sigma)$. Let $R, B, Y$ be   the three color classes such that $|R\cap E_N(G, \sigma)| \equiv |B\cap E_N(G, \sigma)| \pmod 2$. If $(G,\sigma)$ is flow-admissible,  then it has a nowhere-zero $8$-flow  unless $R\cup B$ contains no unbalanced  circuits and  the numbers of unbalanced circuits in  $R\cup Y$ and $B\cup Y$  are both odd and at least $3$, in which case it has a nowhere-zero $10$-flow.
\end{theorem}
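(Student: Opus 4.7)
\medskip
\noindent\textbf{Proof plan.}

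First I would exploit the fact that each of $R\cup B$, $R\cup Y$, and $B\cup Y$ is a $2$-factor whose components are even circuits alternating between the two colour classes. By summing $|E_N\cap C|$ over the components $C$ of such a $2$-factor, the parity of the number of unbalanced circuits in $R\cup B$ (resp.\ $R\cup Y$, $B\cup Y$) equals $|R\cap E_N|+|B\cap E_N|$ (resp.\ $|R\cap E_N|+|Y\cap E_N|$, $|B\cap E_N|+|Y\cap E_N|$) modulo $2$. The hypothesis forces the first of these to be even, and the other two to share a common parity; this is the structural dichotomy that drives the whole case analysis.

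The central construction I would rely on is a pairing lemma: if $C_1,C_2$ are two unbalanced circuits contained in a $2$-factor $F$, and $P$ is a path in $G\setminus F$ joining them, then $C_1\cup P\cup C_2$ admits an integer flow taking values in $\{\pm 1,\pm 2\}$, with $\pm 2$ occurring only on the edges of $P$. The reason is that a single traversal of an unbalanced circuit with value $1$ creates a $\pm 2$ Kirchhoff defect at one vertex that can be absorbed by sending value $2$ along $P$ to the partner circuit, whose own defect cancels it. Balanced circuits in $F$ admit the trivial $\pm 1$ circulation. Combining these two ingredients, any $2$-factor with an even number of unbalanced circuits can be turned into a flow on itself plus a few auxiliary paths, using only values in $\{0,\pm 1,\pm 2\}$.

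With this tool I would split into two main cases. If $R\cup B$ contains no unbalanced circuit, then it already supports a $\{\pm 1\}$-circulation $\phi_1$, and the $Y$-edges must be handled by simultaneously pairing the unbalanced circuits inside $R\cup Y$ and inside $B\cup Y$. When both of those $2$-factors have an even number of unbalanced circuits, or when one of the parities is odd but the count is $1$ so that a single cross-pairing can be absorbed by a local adjustment, the resulting flow lies in $\mathbb{Z}_8$. The residual case, where both counts are odd and at least $3$, forces one to cross-pair an unbalanced circuit of $R\cup Y$ with one of $B\cup Y$ through a common $Y$-edge and an auxiliary path; the shared $Y$-edge then receives contributions from two different pairings, and the combined flow naturally lives in $\mathbb{Z}_{10}$ rather than in $\mathbb{Z}_8$, which is the exceptional conclusion. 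If instead $R\cup B$ does contain unbalanced circuits, there are at least two of them by the parity observation, and I would pair them internally through $Y$-paths; this reduces the leftover problem on $R\cup Y$ and $B\cup Y$ to an easier instance that fits within an $8$-flow.

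The hardest part of the argument is twofold. First, one must guarantee that the connecting paths required by the pairing lemma can always be chosen in a mutually compatible way; this will need flow-admissibility of $(G,\sigma)$ to exclude edge cuts that would obstruct a pairing, presumably through a reduction on small edge cuts and an induction on the number of unbalanced circuits. Second, in the borderline subcase one must both verify that no $8$-flow exists---this is where the ``at least $3$'' hypothesis becomes essential, since with a single unbalanced circuit on either side a cheaper local substitution would suffice---and check by careful modular arithmetic that the cross-paired construction is actually nowhere-zero in $\mathbb{Z}_{10}$.
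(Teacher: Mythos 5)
Your high-level skeleton matches the paper's: the parity bookkeeping (the number of unbalanced circuits in $M_1\cup M_2$ has the parity of $|M_1\cap E_N|+|M_2\cap E_N|$), the case split on whether $R\cup B$ contains an unbalanced circuit, the pairing of unbalanced circuits through connecting paths (this is exactly the content of the cited modulo-to-integer conversion lemmas), and the identification of the exceptional case. However, there are genuine gaps. The most serious one is the subcase where $R\cup B$ is balanced and $R\cup Y$ contains \emph{exactly one} unbalanced circuit $C_1$: you propose to handle it by ``a single cross-pairing \ldots absorbed by a local adjustment,'' but there is nothing to pair $C_1$ with inside $R\cup Y$, and no local fix exists in general. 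The paper must split further: either $G$ contains an unbalanced circuit edge-disjoint from $C_1$ (then one builds a long barbell and extends a $3$-flow through the contracted balanced circuits), or $G-E(C_1)$ is balanced, which requires a separate, fairly delicate lemma producing a $4$-flow covering $C_1$ via a minimal ``cosegment cover'' of $C_1$, using Bouchet's flow-admissibility criterion and Zaslavsky's balance lemma to show such a cover exists. This is the technical heart of the theorem and is entirely missing from your plan; acknowledging that ``flow-admissibility will be needed to exclude obstructing cuts'' does not supply the argument.

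Two further points. First, you assert that in the exceptional case ``one must verify that no $8$-flow exists''; the theorem makes no such claim --- it only says the construction there yields a $10$-flow --- so you have set yourself an unnecessary (and probably unprovable) burden. Second, you repeatedly say the constructed flows ``live in $\mathbb{Z}_8$'' or ``$\mathbb{Z}_{10}$,'' but the theorem concerns integer-valued nowhere-zero flows, and for \emph{signed} graphs a nowhere-zero $\mathbb{Z}_k$-flow is not equivalent to a nowhere-zero $k$-flow. The passage from group-valued to integer-valued flows is precisely what the auxiliary lemmas (converting a $\mathbb{Z}_2$-flow with an even number of odd components into a $3$-flow, an odd number at least three into a $5$-flow, and a $\mathbb{Z}_3$-flow into a $4$-flow) accomplish, and the final flows are obtained as integer combinations such as $f_1+3f_2$, $f_3+2f_5$, and $5f_3+f_6$, with explicit checks that no value hits $0$ or exceeds the bound. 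Finally, in the case where $R\cup B$ does contain an unbalanced circuit, the paper's device is not internal pairing through $Y$-paths but a colour swap of $R$ and $B$ along one unbalanced circuit of $R\cup B$, which realigns all three parities and reduces to the easy subcase; your alternative is not obviously wrong but is undeveloped.
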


As a byproduct, we also prove the following $8$-flow theorem for  hamiltonian  signed graphs.
\begin{theorem}\label{THM-9-flow}
If $(G,\sigma)$ is a flow-admissible hamiltonian  signed  graph, then $(G,\sigma)$ admits a nowhere-zero 8-flow.
\end{theorem}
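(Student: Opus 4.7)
The plan is to reduce Theorem~\ref{THM-9-flow} to Theorem~\ref{THM-10-flow2} by a standard vertex-splitting operation and then dispose of the exceptional case of Theorem~\ref{THM-10-flow2} using the extra structure provided by the Hamilton cycle.

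First I would reduce to the cubic case. Let $(G,\sigma)$ be a flow-admissible hamiltonian signed graph with a Hamilton cycle $C$. For every vertex $v$ of degree $d\ge 4$, replace $v$ by a path $P_v$ on $d-1$ new vertices, declare every edge of $P_v$ positive, arrange that the two $C$-edges at $v$ become incident to the two endpoints of $P_v$, and distribute the remaining $d-2$ edges at $v$ among the vertices of $P_v$ so that every new vertex has degree exactly $3$. The resulting signed graph $(G',\sigma')$ is cubic, remains hamiltonian (lengthen $C$ along the paths), and inherits flow-admissibility. A standard argument shows that every nowhere-zero $k$-flow on $(G',\sigma')$ contracts to a nowhere-zero $k$-flow on $(G,\sigma)$ upon collapsing each $P_v$ back to $v$, so it suffices to produce an $8$-flow on $(G',\sigma')$.

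Next I would apply Theorem~\ref{THM-10-flow2}. Since $G'$ is cubic and hamiltonian, it is $3$-edge-colorable: use the natural coloring in which $R$ and $B$ alternate along the Hamilton cycle $C'$ and $Y$ is the complementary perfect matching. Theorem~\ref{THM-10-flow2} then yields a nowhere-zero $8$-flow on $(G',\sigma')$ unless we fall into the exceptional situation, where $R\cup B=C'$ has no unbalanced circuit (so $C'$ is balanced) while each of $R\cup Y$ and $B\cup Y$ contains an odd number, at least three, of unbalanced circuits.

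Finally I would handle the exceptional case. When $C'$ is balanced, I switch signs at a suitable vertex set so that all edges of $C'$ become positive; every negative edge of $(G',\sigma')$ then lies in the perfect matching $Y$. I would attempt to produce an alternative $3$-edge-coloring of $G'$ via a Kempe-swap along an $R\cup Y$-circuit containing at least one negative $Y$-edge; such a swap exchanges some $R$-edges with some $Y$-edges and therefore reshuffles which color classes contain negative edges, so with care one can arrange that in the new coloring $R\cup B$ does contain an unbalanced circuit. That places $(G',\sigma')$ outside the exceptional case of Theorem~\ref{THM-10-flow2} and yields the desired $8$-flow on $(G',\sigma')$, which pulls back to an $8$-flow on $(G,\sigma)$.

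The main obstacle is this last step: showing that a Kempe swap escaping the exceptional case is always available, or, failing that, building an $8$-flow directly. The direct approach would use flow-admissibility to pair the negative $Y$-edges into barbells or balanced closed walks through $C'$, then combine a $\mathbb{Z}_2$-flow along $C'$ with a $\mathbb{Z}_4$-flow supported on $Y$ together with selected $C'$-paths, and verify that the resulting $\mathbb{Z}_2\times\mathbb{Z}_4$-valued flow is nowhere zero and hence an $8$-flow.
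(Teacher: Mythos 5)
Your overall strategy --- split vertices to reduce to a cubic hamiltonian graph, color the Hamilton cycle alternately $R,B$ with $Y$ the complementary perfect matching, and invoke Theorem~\ref{THM-10-flow2} --- is genuinely different from the paper's proof, and most of it can be made to work. Two points need repair before the main issue. First, the splitting should replace a vertex of degree $d$ by a positive path on $d-2$ (not $d-1$) vertices, and negative loops need separate treatment. Second, Theorem~\ref{THM-10-flow2} does not let you choose freely which two classes are called $R$ and $B$: the hypothesis $|R\cap E_N|\equiv|B\cap E_N|\pmod 2$ forces the labeling. When the Hamilton cycle $C'$ is unbalanced, the two cycle-classes have negative-edge counts of opposite parity, so the pair $\{R,B\}$ must consist of one cycle-class and the matching $Y$; then $R\cup Y$ (in the theorem's labeling) is the single circuit $C'$, which has exactly one unbalanced circuit, so the exceptional case cannot occur and you do get an $8$-flow. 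This case is fine, but not for the reason you state.

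The genuine gap is the exceptional case when $C'$ is balanced: after switching, all negative edges lie in $Y$, $|E_N|$ is odd (otherwise you could relabel so that $Y$ plays the role of $B$ and escape the exception immediately, since $R\cup Y$ contains unbalanced circuits), and both mixed $2$-factors have an odd number, at least $3$, of unbalanced circuits. Your proposed escape is a Kempe swap along an unbalanced circuit $D$ of $R\cup Y$, but you do not show it works, and the obstacle is concrete: $D$ carries an odd number of negative edges, so after the swap the negative-edge parities of the three classes become $(\mathrm{odd},0,\mathrm{even})$, and the theorem's parity hypothesis now forces the pair $\{R,B\}$ to be $\{B,\,Y\bigtriangleup D\}$; you would have to prove that $B\cup(Y\bigtriangleup D)$ contains an unbalanced circuit (or that one of the other exceptional conditions fails), which is exactly the kind of global statement the swap was supposed to deliver and which is left unproved. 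You explicitly flag this step as unresolved, so the argument is incomplete precisely where it matters. For comparison, the paper never reduces to the cubic case: when the Hamilton circuit $C_0$ is balanced it applies Z\'yka's closure lemma (Lemma~\ref{2-operation}) and the modulo-to-integer conversion (Lemma~\ref{TH: 3-to-4}) to get a $4$-flow covering the chords, then adds a $2$-flow on $C_0$; when $C_0$ is unbalanced it forms $H=\bigtriangleup_{e\notin E(C_0)}C_e$ from balanced circuits through the chords and combines two $3$-flows as $3f_5+f_6$ (or uses Lemma~\ref{HC-cover}), checking that the value $8$ is never attained. Either adopt that direct argument or supply a complete treatment of the exceptional case.
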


The organization of the rest of the paper is as follows:  Basic notation and terminology will be introduced in Section~\ref{SEC:notation};  some lemmas needed for the proofs of the main results will be presented in  Section~\ref{SEC:lemma};  the proofs of Theorems~\ref{THM-10-flow2}  and Corollary~\ref{COR}   and the proof of Theorem~\ref{THM-9-flow} will be completed in Sections \ref{SEC:cubic}  and \ref{SEC:hamilton}, respectively.

\section{Notation and Terminology}
\label{SEC:notation}
Graphs  considered in this paper are finite and may have multiple edges or loops. For  terminology and notations not defined here we follow \cite{BM2008, Diestel2010, West1996}.

Let $G$ be a graph.  A {\em leaf vertex} is a vertex of degree $1$. For a vertex subset $U\subseteq V(G)$, denote by $\delta_G(U)$, the set of edges with one endvertex in $U$ and the other in $V(G)-U$. A path is {\em nontrivial} if it  contains at least two vertices. 
Let $u,v$ be two vertices in $V(G)$. A {\em $(u,v)$-path} is a path with $u$ and $v$ as its endvertices.
  Let $C=v_1\cdots v_r v_1$ be a circuit where $v_1,v_2,\dots, v_r$ appear in clockwise on $C$. A {\em segment} of $C$ is the path $v_i v_{i+1}\cdots v_{j-1}v_j$  contained in $C$ and is denoted by $v_iCv_j$, where the indices are taken  modulo $r$.

A {\em signed graph} $(G,\sigma)$ is a graph $G$ together with a {\em signature} $\sigma\colon\ E(G)\to\{\pm1\}$. An edge $e$ is {\em positive} if $\sigma(e) =1$ and {\em negative} if $\sigma(e) =-1$. Denote by  $E_N(G,\sigma)$ the set of negative edges in $(G,\sigma)$. Let $e = uv$ be an edge. By contracting $e$, we mean to  first identify $u$ with $v$ and  then to delete  the loop if $\sigma(e) = 1$ otherwise to  keep the negative loop. An {\em all-positive signed graph} is a  signed graph without negative edges, which is also called 
an {\em ordinary graph}.
A circuit is \emph{balanced} if it contains an even number of negative edges and  is \emph{unbalanced} otherwise. A signed graph is called {\em balanced} if it contains no unbalanced circuit. A signed graph is {\em unbalanced} if it is not balanced. A {\em signed circuit} is defined as a signed graph of any of the following three types:

(1) a balanced circuit;

(2) a short barbell, that is, the union of two unbalanced circuits that meet at a single vertex;

(3) a long barbell, that is, the union of two vertex-disjoint unbalanced circuits with a nontrivial path that meets the circuits only at its endvertices.

We regard an edge $e=uv$ of a signed graph as two half edges $h_e^u$ and $h_e^v$, where $h_e^u$ is incident with $u$ and $h_e^v$ is incident with $v$. Let $H_G(v)$ (or simply $H(v)$ if no confusion occurs) be the set of all half edges incident with $v$, and $H(G)$  be the set of all half edges of $(G,\sigma)$. An {\em orientation} of $(G,\sigma)$ is a mapping $\tau : H(G)\rightarrow \{+1,-1\}$ such that for each $e=uv\in E(G)$, $\tau (h_e^u)\tau (h_e^v)=-\sigma(e)$. For $h_e^u\in H(G)$, $h_e^u$ is {\em oriented away from $u$} if $\tau (h_e^u)=1$ and $h_e^u$ is {\em oriented toward $u$} if $\tau (h_e^u)=-1$. A signed graph $(G,\sigma)$ together with an orientation $\tau$ is called an {\em oriented signed graph}, denoted by $(G,\tau)$.

\begin{definition}
\label{DE: Flow}
Let $(G,\sigma)$ be a signed graph  and $\tau$ be an orientation of $(G, \sigma)$. Let $k\geq 2$ be an integer and  $f: E(G) \to \Z$ be a mapping such that $|f(e)| \leq k-1$.
\begin{itemize}
\item[(1)] For each vertex $v$,  the {\em boundary} of $(\tau,f)$ at $v$ is $\partial (\tau, f)(v)=\sum_{h\in H(v)}\tau(h)f(e_h)$.

\item[(2)]The {\em support} of $f$, denoted by $\supp (f)$, is the set of edges $e$ with $|f(e)| >0$.

\item [(3)] If $\partial (\tau,f)(v)=0$ for each vertex $v$, then $(\tau,f)$  is called a {\em $k$-flow} of $(G,\sigma)$.  A $k$-flow  $(\tau,f)$ is said to be  {\em nowhere-zero}  if $\supp (f)=E(G)$.

\item [(4)]   If $\partial (\tau,f)(v) \equiv 0 \pmod k$ for each vertex $v$,
then  $(\tau,f)$ is called a {\em $\Z_k$-flow} of $(G,\sigma)$.  A $\Z_k$-flow $(\tau,f)$ is said to be  {\em nowhere-zero}  if $\supp (f)=E(G)$.
\end{itemize}
\end{definition}

For a mapping $f: E(G)\to {\mathbb Z}$, denote $E_{f=\pm i}=\{e\in E(G) : |f(e)|=i\}.$

For convenience, we  shorten the notation of nowhere-zero  $k$-flow  and nowhere-zero $\Z_k$-flow as $k$-NZF and $\Z_k$-NZF,  respectively.  If the orientation is understood from the context, we use $f$  instead of $(\tau, f)$ to denote a flow.

A signed graph is {\em flow-admissible} if it admits a nowhere-zero $k$-flow for some integer $k$. In a signed graph,
 {\em switching}  a vertex $u$ means reversing the signs of all edges incident with $u$. Two signed graphs are {\em equivalent} if one can be obtained from the other
by a sequence of switches. Note that  a signed graph is  balanced if and only  if it is equivalent to an all-positive graph.
 Note that switching  a vertex does not change the parity of the number of negative edges in a circuit and  although technically it  changes the flows, it only reverses the directions of the half edges incident with the vertex, but the directions of other half edges and the flow values of all edges remain the same. Bouchet \cite{Bouchet1983} gave a characterization for flow-admissible signed graphs.

\begin{lemma}{\rm (Bouchet~\cite{Bouchet1983})}
\label{Flow-amissible}
A connected signed graph $(G,\sigma)$ is flow-admissible if and only if it is not equivalent to a signed graph with exactly one negative edge and it has no bridge $b$ such that $(G-b,\sigma|_{G-b})$ has a balanced component.
\end{lemma}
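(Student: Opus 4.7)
My plan is to address the two directions of the equivalence separately. For necessity, I would use a double-counting argument on the boundary equations of any hypothetical nowhere-zero flow. For sufficiency, I would verify that every edge lies in some signed circuit, and then construct a nowhere-zero flow by combining the characteristic flows of these signed circuits with suitably chosen integer coefficients.

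For necessity, let $(\tau, f)$ be a nowhere-zero $k$-flow of $(G,\sigma)$; since flow-admissibility and the two stated conditions are all invariant under switching, I may work with any equivalent signature. Suppose first that $(G,\sigma)$ is equivalent to a signature whose only negative edge is $e_0$. Summing $\partial(\tau, f)(v)=0$ over all $v\in V(G)$, each positive edge contributes $\tau(h_e^u)f(e)+\tau(h_e^v)f(e)=0$ because $\tau(h_e^u)\tau(h_e^v)=-1$, while each negative edge contributes $\pm 2f(e)$; the global sum then reduces to $\pm 2 f(e_0)=0$, contradicting $f(e_0)\neq 0$. Next, let $b=uv$ be a bridge with $u$ lying in a balanced component $C$ of $G-b$. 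After switching the vertices of $V(C)$ so that every edge of $G[V(C)]$ becomes positive, summing $\partial(\tau, f)$ over $V(C)$ cancels all internal contributions in pairs and leaves only $\tau(h_b^u)f(b)=0$, again a contradiction.

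For sufficiency, assuming both conditions hold, the heart of the argument is the combinatorial claim that every edge $e\in E(G)$ is contained in some signed circuit of $(G,\sigma)$. If $e$ is a bridge, then condition~(ii) guarantees that both components of $G-e$ are unbalanced, so each contains an unbalanced circuit, and together with $e$ as the connecting path these form a long barbell through $e$. If $e$ is not a bridge, then $e$ lies on some circuit $C_1$; if $C_1$ is balanced we are done, and otherwise $C_1$ is unbalanced, in which case I would use condition~(i) in the contrapositive to produce a second unbalanced circuit $C_2$ which, combined with $C_1$ (and possibly a short connecting path), forms a short or long barbell containing $e$. Once every edge is covered by a signed circuit, I would invoke the fact that each balanced circuit supports a nowhere-zero $\Z$-valued flow with all values equal to $1$ and each barbell supports one with all values in $\{\pm 1,\pm 2\}$ (value $2$ around each unbalanced side, value $1$ on the connecting path, oriented so boundaries vanish even at the ``sign-flip'' vertices). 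A generic $\Z_p$-linear combination of these circuit flows, for a sufficiently large prime $p$, is then nowhere-zero on all edges, giving a $\Z_p$-NZF that lifts to an integer $k$-flow.

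The main obstacle is the non-bridge unbalanced subcase of the circuit-cover claim: making precise how the failure to find an appropriate second unbalanced circuit $C_2$ forces all negative edges to collapse to a single edge under switching. I would attack this by fixing a signature with the minimum number of negative edges among all equivalent signatures and analyzing how these remaining negative edges are distributed across the $2$-edge-connected blocks containing $e$; if every unbalanced circuit through $e$ were to pass through one particular edge $e^*$, a switching along an appropriate vertex cut could eliminate all but one negative edge, contradicting~(i). The rest of the sufficiency argument is then a relatively routine linear-algebraic combination of the signed-circuit flows already constructed.
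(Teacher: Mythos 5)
The paper does not actually prove this lemma---it is quoted from Bouchet's 1983 paper---so there is no internal proof to compare against; I will assess your argument on its own terms. Your necessity direction is correct and complete: summing the boundary identities over all of $V(G)$ kills the positive edges and leaves $\pm 2f(e_0)$ when there is a single negative edge, and summing over the vertex set of a balanced (hence switchable to all-positive) component of $G-b$ leaves $\pm f(b)$. Your sufficiency strategy---show every edge lies in a signed circuit, then take a linear combination of characteristic flows---is also the standard and correct route, and the bridge case is handled adequately.

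However, there is a genuine gap exactly where you flag one: the case of a non-bridge edge $e$ such that every circuit through $e$ is unbalanced. This case is the entire combinatorial content of the sufficiency direction, and your sketch does not settle it. First, even granting a second unbalanced circuit $C_2$, you cannot in general combine it with $C_1$ ``possibly with a short connecting path'' into a barbell: if $C_1$ and $C_2$ meet in two or more vertices, their union is neither a short nor a long barbell, and one must instead extract from $C_1\cup C_2$ either a balanced circuit through $e$ or a genuine barbell through $e$---a nontrivial case analysis (compare the use of Zaslavsky's spanning-tree criterion, Lemma~\ref{balanced-extension}, in the proof of Lemma~\ref{HC-cover} of this paper, which is solving a problem of exactly this type). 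Second, the proposed contradiction ``if every unbalanced circuit through $e$ passes through one edge $e^*$, then switching along a vertex cut reduces to one negative edge'' is asserted, not proved; as stated it is not even clear which cut to switch or why a single negative edge results. Two smaller defects: your characteristic flow of a barbell has the values reversed---the unbalanced circuits must carry value $1$ and the connecting path value $2$, since a unit flow around an unbalanced circuit leaves a surplus of $\pm 2$ at its attachment vertex; with $2$ on the circuits and $1$ on the path the boundary condition fails. And the final step ``a $\Z_p$-NZF lifts to an integer $k$-flow'' is itself a nontrivial theorem for signed graphs (unlike for ordinary graphs; this is precisely what Lemmas~\ref{TH: 3-to-4}--\ref{flow of g-barbell} of the paper labor to do in special cases); it is cleaner to avoid modular flows entirely and take an integer linear combination $\sum_i \lambda_i f_i$ of the integer-valued signed-circuit flows with coefficients growing fast enough that no cancellation occurs on any edge.
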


\section{Lemmas}
\label{SEC:lemma}

Given a signed graph $(G,\sigma)$, 
let $H$ be a signed subgraph of $(G, \sigma)$ and $C$ be a balanced circuit. Define the following operation:
$$\Phi_2\mbox{-operation}: \mbox{ add a balanced circuit $C$ into $H$ if $|E(C)- E(H)|\leq 2$.}$$
We use $\langle H\rangle_2$ to denote the maximal
 subgraph of $G$ obtained from $H$ via $\Phi_2$-operations. Z\'{y}ka \cite{Zyka1987} proved the following result.

\begin{lemma}{ \rm (Z\'{y}ka \cite{Zyka1987})}
\label{2-operation}
Let $(G,\sigma)$ be a signed graph and $H$ be a subgraph of $G$. If  $\langle H\rangle_2=G$, then $(G,\sigma)$ admits a $\Z_3$-flow $\phi$ such that $E(G) - E(H)\subseteq \supp(\phi)$.
\end{lemma}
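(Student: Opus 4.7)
The plan is to set up an induction on the length of a $\Phi_2$-expansion sequence for $G$, but instead of inductively extending a flow one circuit at a time (which can fail because both nonzero scalars in $\Z_3$ may be ruled out by edges in the overlap), I would construct $\phi$ globally as a $\Z_3$-linear combination of circuit flows and choose the scalars in reverse order.

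Since $\langle H\rangle_2 = G$, I would fix a witnessing sequence $H = H_0 \subseteq H_1 \subseteq \cdots \subseteq H_n = G$ in which $H_i = H_{i-1} \cup C_i$ for a balanced circuit $C_i$ with $|E(C_i) \setminus E(H_{i-1})| \leq 2$. Because $C_i$ is balanced, it carries a nowhere-zero $\Z_3$-flow $\phi_i$ (with values in $\{1,2\}$) supported on $E(C_i)$; extended by $0$ outside, each $\phi_i$ is a $\Z_3$-flow on $G$. Hence, for any choice of scalars $\alpha_1,\dots,\alpha_n \in \Z_3$, the sum $\phi := \sum_{i=1}^{n} \alpha_i \phi_i$ is a $\Z_3$-flow on $G$.

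To arrange $E(G) \setminus E(H) \subseteq \supp(\phi)$, for each $e \in E(G) \setminus E(H)$ let $i(e) := \min\{\,i : e \in E(C_i)\,\}$. Minimality together with $e \notin E(H_0)$ forces $e \notin E(C_j)$ for all $j < i(e)$, so
\[
\phi(e) \;=\; \alpha_{i(e)}\,\phi_{i(e)}(e) \;+\; \sum_{j>i(e),\, e \in E(C_j)} \alpha_j\,\phi_j(e).
\]
I would then select $\alpha_n, \alpha_{n-1}, \ldots, \alpha_1$ in reverse order. When $\alpha_i$ is the unknown being chosen, the later $\alpha_j$'s are already fixed, and for each edge $e$ with $i(e) = i$ the requirement $\phi(e) \neq 0$ excludes exactly one value of $\alpha_i$ in $\Z_3$. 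Since $|E(C_i) \setminus E(H_{i-1})| \leq 2$, at most two values in $\Z_3$ are forbidden at this stage, leaving at least one admissible choice for $\alpha_i$.

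The main obstacle, and the reason a direct one-circuit-at-a-time induction is inadequate, is that a single $\Phi_2$-step may share many edges with what is already present, so constraints coming from previously-required-nonzero edges in the overlap $E(C_i) \cap E(H_{i-1})$ could in principle rule out both nonzero scalars in $\Z_3$. Choosing the coefficients from $\alpha_n$ downward localises each edge's constraint to the single layer in which it first appears, so each coefficient is restricted by at most two conditions; this is exactly the threshold the field $\Z_3$ can accommodate, and once all $\alpha_i$'s are chosen the resulting $\phi$ is the desired $\Z_3$-flow.
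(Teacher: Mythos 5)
Your argument is correct. The paper itself gives no proof of this lemma---it is quoted from Z\'yka's thesis---so there is nothing internal to compare against; but your construction (write $\phi$ as $\sum_i\alpha_i\phi_i$ over the witnessing sequence of balanced circuits and choose the coefficients $\alpha_n,\dots,\alpha_1$ in reverse order, noting that each edge of $E(G)\setminus E(H)$ constrains only the coefficient of the circuit in which it first appears, so each $\alpha_i$ faces at most $|E(C_i)\setminus E(H_{i-1})|\le 2$ forbidden values out of the three available in $\Z_3$) is exactly the standard Seymour-style argument underlying this lemma, and all the steps check out: the edges constraining $\alpha_i$ are precisely those of $E(C_i)\setminus E(H_{i-1})$, and each excludes exactly one value because $\phi_i(e)\in\{1,2\}$ is invertible in $\Z_3$.
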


It is clear that a signed graph admits a $\Z_2$-NZF if and only if each component of $(G,\sigma)$ is eulerian.
The next lemma gives a characterization of signed graphs admitting a $2$-NZF.
\begin{lemma}{\rm (Xu and Zhang \cite{Xu2005})}
 \label{eulerian-2-flow}
A signed graph $(G,\sigma)$ admits a $2$-NZF if and only if each component of $(G,\sigma)$ is eulerian and has an even number of negative edges.
\end{lemma}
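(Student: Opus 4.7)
Since both the hypothesis and the conclusion are local to each connected component, I plan to reduce at once to the case that $(G,\sigma)$ is connected, and then handle the two implications separately.

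For the necessity direction, suppose $(\tau,f)$ is a $2$-NZF, so $f(e)\in\{+1,-1\}$ everywhere. At each vertex $v$ the boundary $\partial(\tau,f)(v)=\sum_{h\in H(v)}\tau(h)f(e_h)$ is a sum of $d(v)$ terms each equal to $\pm 1$, and must vanish; hence $d(v)$ is even, so $G$ is eulerian. To capture the parity of $|E_N(G,\sigma)|$, I would double-count the half-edges $h$ for which $\tau(h)f(e_h)=+1$. Vertex-wise, exactly $d(v)/2$ such half-edges sit at each $v$, giving total $|E(G)|$. Edge-wise, the orientation law $\tau(h_e^u)\tau(h_e^v)=-\sigma(e)$ together with $f(e)^2=1$ shows that each positive edge contributes exactly one such half-edge, while each negative edge contributes $0$ or $2$. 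Equating the two counts forces $|E_N(G,\sigma)|$ to be even.

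For the sufficiency direction, I would construct a $2$-NZF by taking an Eulerian circuit $v_0 e_1 v_1 e_2 \cdots e_m v_0$, setting $f(e_i)=1$ for all $i$, and defining the orientation $\tau$ greedily along the walk. Writing $s_i:=\tau(h_{e_i}^{v_{i-1}})$, I would initialize $s_1=+1$ and propagate using the edge law $\tau(h_{e_i}^{v_i})=-\sigma(e_i)s_i$ and the cancelation rule $s_{i+1}=-\tau(h_{e_i}^{v_i})$, which together give $s_{i+1}=\sigma(e_i)s_i$ and hence $s_i=\prod_{j<i}\sigma(e_j)$. By construction, every intermediate visit to any vertex pairs its two half-edges with opposite $\tau$-values, contributing $0$ to the boundary; the only consistency condition left is the ``opening-closing'' pair at $v_0$, and a direct computation reduces it to $\prod_{j=1}^{m}\sigma(e_j)=1$, which holds precisely because $(G,\sigma)$ has an even number of negative edges.

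The main obstacle I anticipate is the bookkeeping at a multiply-visited vertex in the sufficiency direction: checking that at a vertex $v_0$ traversed $r$ times by the Eulerian circuit, the $r-1$ intermediate visits really cancel in pairs, so that the parity obstruction collapses to the single opening-closing pair. Once this pairing picture is spelled out carefully, loops (positive or negative) and parallel edges fit into the same half-edge framework with no further adjustment, and the recursive sign computation closes the argument.
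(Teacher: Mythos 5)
Your argument is correct. Note, however, that the paper does not prove this statement at all: it is quoted as a known result of Xu and Zhang \cite{Xu2005}, so there is no in-paper proof to compare against, and what you have written is a self-contained proof of the cited lemma. Both halves check out. For necessity, the boundary at $v$ is a sum of $d(v)$ terms equal to $\pm1$, so $d(v)$ is even and exactly $d(v)/2$ half-edges satisfy $\tau(h)f(e_h)=+1$; summing gives $|E(G)|$, while the edge-wise count is $|E(G)\setminus E_N|$ plus an even number because the two terms of an edge $e$ have product $\tau(h_e^u)\tau(h_e^v)f(e)^2=-\sigma(e)$ — this forces $|E_N|$ even, and the computation is correctly local to each component. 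For sufficiency, your recursion $s_{i+1}=\sigma(e_i)s_i$ is exactly what the edge law plus cancelation at intermediate visits dictates, every half-edge receives a value exactly once since the Eulerian circuit uses each edge once, each pass through a vertex contributes $0$ to its boundary, and the only residual condition, $1-\prod_{j=1}^m\sigma(e_j)=0$ at the base vertex, is precisely the even-negative-edge hypothesis; loops and multiple edges are handled uniformly by the half-edge bookkeeping as you indicate. This is essentially the standard Eulerian-circuit argument for signed (bidirected) $2$-flows, and I see no gap.
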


The next two lemmas show  how to  convert a modulo flow to an integer-valued flow.

\begin{lemma}{\rm (Cheng et al. \cite{CLLZ2018})}
\label{TH: 3-to-4}
Let $(G,\sigma)$ be a bridgeless signed graph. If $(G,\sigma)$ admits a  $\Z_3$-flow $f_1$, then it admits a  $4$-flow $f_2$ such that $\supp(f_1)\subseteq  E_{f_2 = \pm 1} \cup E_{f_2 = \pm 2}$.
\end{lemma}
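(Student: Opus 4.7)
The plan is to construct $f_2$ as a perturbation of $f_1$ by a multiple of $3$. First, I would replace $f_1$ by its integer representative with $|f_1(e)|\le 1$, so $f_1(e)\in\{-1,0,1\}$. Since $(\tau,f_1)$ is a $\Z_3$-flow, the boundary $\partial(\tau,f_1)(v)$ is divisible by $3$ at every vertex; write $d(v):=\partial(\tau,f_1)(v)/3\in\Z$. Setting $f_2:=f_1+3g$ for some integer edge function $g$, the flow condition $\partial(\tau,f_2)\equiv 0$ becomes $\partial(\tau,g)=-d$. To enforce both $|f_2(e)|\le 3$ everywhere and $f_2(e)\in\{\pm 1,\pm 2\}$ on $\supp(f_1)$, the constraints translate to $|g(e)|\le 1$ everywhere together with $g(e)\cdot f_1(e)\le 0$ on $\supp(f_1)$.

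Next, I would build $g$ by decomposing $d$ into sources and sinks and routing unit increments of $g$ between them. Write $d=d^+-d^-$ with $d^\pm\ge 0$; pair each unit of $d^+$ at some vertex $u$ with a unit of $d^-$ at some vertex $v$, and for each pair push a single unit along a signed walk from $u$ to $v$, which decreases the total defect $\sum_v|d(v)|$. Iterating, an induction on this total defect reduces to the case $d\equiv 0$, in which $f_1$ itself is already an integer flow with the required property. The bridgeless hypothesis enters here to guarantee sufficient edge-connectivity so that between any pair $(u,v)$ there are multiple edge-disjoint routes, and parity issues caused by negative edges can be corrected by concatenating with signed circuits in the cycle space.

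The hard part will be enforcing the sign constraint $g(e)\cdot f_1(e)\le 0$ on $\supp(f_1)$. When a naive routing walk crosses an edge $e\in\supp(f_1)$ in the wrong direction, I would modify the walk by adding the characteristic flow of a signed circuit through $e$, thereby reversing the local contribution of $g$ on that edge without altering the boundary condition $\partial(\tau,g)=-d$. Showing that such a corrective signed circuit always exists relies on bridgelessness together with a case analysis on whether the relevant components of $\supp(f_1)$ and of $G\setminus\supp(f_1)$ are balanced or unbalanced — the latter being needed because the three types of signed circuits (balanced circuits, short barbells, long barbells) have to be selected appropriately to flip the sign of $g$ on the targeted edge. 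I expect this case analysis, not the induction on total defect, to carry the bulk of the technical content.
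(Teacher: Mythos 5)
This lemma is quoted from Cheng et al.\ \cite{CLLZ2018}; the paper gives no proof of its own, so your attempt can only be judged against the known argument there, which does follow your general ansatz $f_2=f_1+3g$ with an integer representative $f_1\in\{0,\pm1\}$ and an induction on the total boundary defect. That part of your plan is sound, and your translation of the target conditions into $|g(e)|\le 1$ together with $g(e)f_1(e)\le 0$ on $\supp(f_1)$ is correct.

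There are, however, two genuine gaps. First, your routing step begins by pairing each unit of $d^+$ with a unit of $d^-$, which tacitly assumes $\sum_v d(v)=0$. In a signed graph this fails: a negative edge $e=uv$ has $\tau(h_e^u)=\tau(h_e^v)$, so it contributes $\pm 2f(e)$ to $\sum_v\partial(\tau,f)(v)$, and the total defect can be concentrated entirely in $d^+$ (or be odd). The correct repertoire of moves is larger than source-to-sink paths: a walk with an odd number of negative edges changes the boundary at its two ends by the \emph{same} sign (so it cancels two units of like-signed excess), and a unit pushed around an unbalanced circuit changes the boundary at its base vertex by $\pm 2$; both are indispensable, and "concatenating with signed circuits to fix parity" does not substitute for building these into the induction. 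Second, and more seriously, the inductive step must produce an augmenting walk all of whose edges still have room to absorb the increment (i.e., respecting $|g(e)|\le1$ everywhere and the sign constraint on $\supp(f_1)$). Bridgelessness of $G$ guarantees nothing about the subgraph of still-adjustable edges after earlier routings, so "multiple edge-disjoint routes" is not available where you need it. This is exactly where the proof in \cite{CLLZ2018} does its real work, via an extremal choice of the representative (minimizing $\sum_v|\partial(\tau,f)(v)|$ over all admissible lifts) and a contradiction argument rather than a greedy sequence of routings; as written, your induction on the total defect has no justification that it can always take a step.
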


\begin{definition}
Let $f$ be a $\Z_2$-flow of $(G,\sigma)$. Then $\supp(f)$  consists of  vertex-disjoint  eulerian subgraphs. A component of $\supp(f)$ is called {\em even} if it contains an even number of negative edges; otherwise it is called {\em odd}.
\end{definition}

\begin{lemma} {\rm (Cheng et al. \cite{CLLZ2018})}
\label{TH: 2-to-3} Let $(G,\sigma)$ be a connected signed graph. If
 $(G,\sigma)$ admits a $\Z_2$-flow $f_1$
such that $\supp(f_1)$ contains an even number of odd components,
then it admits a $3$-flow $f_2$
such that $\supp(f_1)=E_{f_2 = \pm 1}$ and $\supp(f_2)/\supp(f_1)$ is acyclic.
\end{lemma}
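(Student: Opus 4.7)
The plan is to construct $f_2 = f_2' + f_2''$, where $f_2'$ is a $\pm 1$-valued lift of $f_1$ supported on $\supp(f_1)$ and $f_2''$ takes values in $\{0,\pm 2\}$ on a subforest of $E(G)\setminus \supp(f_1)$, designed so that their boundaries cancel at every vertex. To set up the subforest, contract each component $H_i$ of $\supp(f_1)$ to a single vertex $h_i$, producing a connected auxiliary graph $G'$; pick a spanning tree $T$ of $G'$ and let $S \subseteq V(T)$ be the vertices coming from the odd components of $\supp(f_1)$. Since $|S|$ is even by hypothesis, the unique $S$-join $J \subseteq E(T)$ (the set of tree edges $e$ whose deletion leaves an odd number of $S$-vertices on one side of $T-e$) is well-defined, and every component of $J$ is a subtree of $T$ containing an even number of $S$-vertices.

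Define $f_2''$ on $J$: every subtree of $T$ is balanced (it contains no circuits, let alone unbalanced ones), so within each component of $J$ we may pair the $S$-vertices evenly and realize the corresponding source/sink pattern ($+2$ on one half, $-2$ on the other) as the unique integer flow on that component. The result is a function taking values $\pm 2$ on $E(J)$ and $0$ elsewhere, with boundary $\pm 2$ distributed among the original vertices of each odd $H_i$ (summing to $\pm 2$) and net $0$ across each even $H_i$. Next, on each $H_i$, build $f_2'$ as a $\pm 1$ lift of $f_1|_{H_i}$ whose boundary at every $v\in V(H_i)$ equals $-\partial f_2''(v)$: the target sums to $0$ for even $H_i$ and $\mp 2$ for odd $H_i$, matching the parity obstruction for a $2$-NZF given by Lemma~\ref{eulerian-2-flow}. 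Since $H_i$ is a connected eulerian signed graph, any such vertex-boundary pattern of the correct parity can be realized by decomposing $E(H_i)$ into closed walks through the prescribed source/sink vertices (paired up in advance) and assigning alternating $\pm 1$ values along each walk. Setting $f_2 = f_2' + f_2''$ then yields a $3$-flow with $E_{f_2=\pm 1}=\supp(f_1)$ and $E_{f_2=\pm 2}\subseteq E(T)$ acyclic, as required.

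The technical crux is this last step --- constructing a $\pm 1$ lift on each $H_i$ with prescribed per-vertex boundary, particularly when the $J$-edges enter $H_i$ at several distinct vertices. This requires a careful closed-walk decomposition of eulerian signed graphs together with consistent sign choices between the two constructions, so that the parity conditions forced by Lemma~\ref{eulerian-2-flow} on each $H_i$ are met simultaneously; the evenness of $|S|$ and of the $S$-intersection of each $J$-component is precisely what makes the coordination feasible.
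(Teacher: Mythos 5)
The paper does not actually prove this lemma --- it is quoted from Cheng et al.\ \cite{CLLZ2018} --- so the only in-house comparison point is the closely analogous Lemma~\ref{flow of g-barbell}, which the authors prove by taking a minimal counterexample, contracting $\supp(f_1)$ to a tree, and peeling off one leaf component at a time. Your proposal instead attempts a single global construction: a $T$-join flow $f_2''$ with values $\pm 2$ on a subforest, plus a $\pm1$-valued lift $f_2'$ of $f_1$ on each eulerian component with prescribed boundary. The superstructure (contract, spanning tree, $S$-join, path decomposition giving $\pm2$ on each $J$-edge) is fine, but the argument is incomplete exactly where you say the ``technical crux'' is, and that step is not merely technical --- as stated it can fail.

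The gap is the claim that each component $H_i$ of $\supp(f_1)$ admits a $\pm1$-valued lift of $f_1|_{H_i}$ whose boundary at every vertex equals $-\partial f_2''(v)$. Two problems. First, the prescribed boundary is not just ``net $\pm2$ over $H_i$'': the $J$-edges attach at specific vertices of $H_i$ fixed by $G$, and several of them may attach at the same vertex $u$ with same-sign contributions, forcing $|\partial f_2''(u)|=4$ or more. Since a $\pm1$-valued function satisfies $|\partial f_2'(u)|\le d_{H_i}(u)$, this is impossible when, say, $H_i$ is a circuit (every vertex has degree $2$) and two $J$-paths both enter $H_i$ at $u$. Nothing in your construction rules this out; fixing it requires either re-choosing path orientations/pairings or a different global strategy. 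Second, even when the degree bound holds, the existence of a $\pm1$-valued function on a connected signed eulerian graph realizing an arbitrary prescribed even boundary vector of the correct total parity is itself a nontrivial assertion --- it is a strengthening of Lemma~\ref{eulerian-2-flow} and needs its own proof via a careful closed-trail decomposition; parity is necessary but you have not shown it is sufficient. The inductive ``leaf-component'' argument used for Lemma~\ref{flow of g-barbell} (and in \cite{CLLZ2018}) avoids both issues, because at each step only a single connecting edge meets the component being removed, so the prescribed boundary is always a single $\pm2$ at one vertex. As it stands, your proposal is a plan whose hardest step is deferred rather than a proof.
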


Lemma~\ref{TH: 2-to-3} can be extended to the case when the support of a $\Z_2$-flow contains an odd number of odd components in the following lemma.
\begin{lemma}
\label{flow of g-barbell}
 Let $(G,\sigma)$ be a connected signed graph. If
 $(G,\sigma)$ admits a $\Z_2$-flow $f_1$
such that  the number of  odd components of $\supp(f_1)$  is odd and is at least three, then $(G,\sigma)$ has a $5$-flow  $f_2$ satisfying

\medskip \noindent
(1)  $\supp(f_2)/\supp(f_1)$ is acyclic;

\medskip \noindent
(2)  $\supp(f_1)\subseteq \{e\in E(G)\colon\ 1\leq |f_2(e)| \leq 3\}$ and $|f_2(e)| \in \{1,2\}$ for each negative loop $e \in \supp(f_1)$.
\end{lemma}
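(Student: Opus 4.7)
The plan is to extend the pair-and-barbell approach of Lemma~\ref{TH: 2-to-3} to the case of an odd number of odd components by introducing a ``generalized barbell'' built on three odd components. Let $C_1, \ldots, C_{2k+1}$ denote the odd components of $\supp(f_1)$ (with $k \ge 1$) and let $D_1, \ldots, D_m$ denote the even components. I would designate three odd components, say $C_1, C_2, C_3$, to form the generalized barbell, and pair the remaining odd components as $(C_4, C_5), (C_6, C_7), \ldots, (C_{2k}, C_{2k+1})$.

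Since $G$ is connected, the contracted graph $G/\supp(f_1)$ is connected. I would choose a forest $F$ in $G/\supp(f_1)$ such that each pair $(C_{2i}, C_{2i+1})$ for $i \ge 2$ lies in a tree component of $F$ realising a single path between $C_{2i}$ and $C_{2i+1}$, and $C_1, C_2, C_3$ all lie in a single tree component of $F$. When lifted back to $G$, the triple component should yield two paths $P$ (from a vertex of $C_1$ to $C_2$) and $Q$ (from a vertex of $C_3$ to $C_2$) meeting at a common vertex $u_2 \in V(C_2)$. The acyclicity of $F$ in $G/\supp(f_1)$ gives condition~(1) of the lemma.

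Next I would assign flows component by component. For each paired barbell $C_{2i} \cup P_i \cup C_{2i+1}$, standard signed-barbell theory provides an integer flow $h_i$ with $|h_i|=1$ on each edge of $C_{2i}$ and $C_{2i+1}$ and $|h_i|=2$ on each edge of $P_i$. For the generalized barbell $B_0 = C_1 \cup P \cup C_2 \cup Q \cup C_3$, I would set the flow to be $\pm 1$ on $C_1$ and $C_3$ (so the boundary is $\pm 2$ at the attachment vertices $u_1, u_3$), $\pm 2$ on $C_2$ (so the boundary is $\pm 4$ at $u_2$), and $\pm 2$ on each of $P$ and $Q$, with signs chosen so that the contributions cancel at $u_1, u_2, u_3$. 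For each even component $D_j$, Lemma~\ref{eulerian-2-flow} provides a $2$-flow $g_j$ with $|g_j|=1$ on $D_j$. Summing all these flows gives $f_2$, and I would verify that the result is nowhere zero on $\supp(f_1)$ with values in $\{\pm 1, \pm 2, \pm 3\}$ there and bounded by $\pm 4$ overall.

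The main obstacle will be proving the existence of the flow on $B_0$---specifically, that each unbalanced eulerian subgraph admits a signed flow of constant absolute value with boundary $\pm 2$ concentrated at one designated attachment vertex (the signed analogue of an Eulerian circuit argument), and that the contributions of $P, Q$ on $C_2$ combine to force flow value $\pm 2$ on all edges of $C_2$. The bookkeeping on values is then routine: on $\supp(f_1)$-edges belonging to a single component and not traversed by any path, $|f_2| \in \{1,2\}$; on $\supp(f_1)$-edges traversed by one of the paths, $|f_2|$ can reach $1+2=3$; and on path-edges outside $\supp(f_1)$, $|f_2|$ can reach $2+2=4$ when two paths share an edge (e.g.\ $P$ and $Q$ as they approach $u_2$), which is precisely why the lemma asserts a $5$-flow rather than a $3$-flow. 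Negative loops, being single-edge odd components not traversed by any path, retain $|f_2| \in \{1,2\}$, matching the second part of condition~(2). A secondary technicality is routing $P$ and $Q$ so they terminate at the same vertex $u_2$ of $C_2$, which is possible because $G$ is connected.
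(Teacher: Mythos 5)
Your reduction to one ``generalized barbell'' $B_0=C_1\cup P\cup C_2\cup Q\cup C_3$ plus ordinary barbells on the remaining pairs is natural, but the construction breaks down at exactly the point you flag as the main obstacle, and the failure is not repairable within your uniform-value ansatz. Connectivity of $G$ does \emph{not} let you route $P$ and $Q$ to a common vertex $u_2$ of $C_2$: it only guarantees they reach some vertices $a,b$ of $C_2$, which may be forced to be distinct. Concretely, let $C_2$ be an unbalanced $4$-circuit $a\,x\,b\,y\,a$, attach unbalanced circuits $C_1$ and $C_3$ by single edges $e_1=u_1a$ and $e_2=bu_3$, and take $\supp(f_1)=E(C_1)\cup E(C_2)\cup E(C_3)$ (three odd components). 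Then $P=e_1$ and $Q=e_2$ end at $a\neq b$ and there is nowhere else to route them. If both carry magnitude $2$, the total flow delivered into $C_2$ is $0$ or $\pm4$; summing boundaries over $V(C_2)$ forces the unique negative edge of $C_2$ to carry $0$ or $\pm2$, and then conservation at the degree-two vertices $x,y$ forces one arc of $C_2$ between $a$ and $b$ to carry $0$ or $\pm4$ --- violating either nowhere-zero on $\supp(f_1)$ or the bound $|f_2(e)|\le3$ in (2). Rerouting $Q$ along an arc of $C_2$ to reach $a$ likewise puts $2\pm2\in\{0,4\}$ on edges of $\supp(f_1)$. The actual $5$-flow here is genuinely non-uniform: $e_1$ carries $\pm4$, $C_1$ carries $\pm2$, and $C_2$ carries values in $\{\pm1,\pm3\}$, none of which your ansatz can produce (and note value $4$ appears on a lone connecting edge, not because two paths overlap, so your accounting of where $4$ arises is also off). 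A secondary gap: when connecting paths share edges or pass through other components of $\supp(f_1)$, you only control the sum of two contributions; nothing prevents further overlaps, or a path crossing the base-value-$2$ circuit $C_2$, producing values $0$ or $\ge5$.

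The paper's proof avoids all of this by induction rather than direct construction: contract $\supp(f_1)$ to a tree $H$, take a leaf component $B_u$ (shown to be odd with a non-odd neighbour $B_v$), delete it and plant a negative loop at the attachment vertex of $B_v$; the smaller instance still has the same odd number of odd components, its inductive $5$-flow gives the loop a value $a$ with $|a|\in\{1,2\}$, and splicing back assigns $a$ times a $\pm1$-valued $2$-flow to $B_u$ and $2a$ to the connecting edge. The asymmetric magnitudes that the chain example demands arise automatically from this peeling. To salvage a direct construction you would have to let the circuits and paths of $B_0$ carry magnitudes depending on the tree structure, which essentially re-derives that induction.
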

\begin{proof}
Let $(G,\sigma)$ together with a $\Z_2$-flow $(\tau, f_{1})$
be a counterexample to Lemma~\ref{flow of g-barbell}
such that $|E(G)|$ is minimized. In the  following, we always assume the flows are under the orientation $\tau$ or its restriction on according subgraphs.

Denote by $\mathcal{B}$ the set of components of $\supp(f_{1})$ and  let $H= G/\supp(f_1)$. Thus $V(H)$ can be partitioned into three parts: $X, Y$ and $W$ where $X$ and $Y$ are the sets of vertices corresponding to even and odd components in $\mathcal{B}$ respectively and $W$ is corresponding to the vertices  which are also the vertices in $V(G)$. For $u \in X\cup Y$, let $B_u$ denote the corresponding component in $\mathcal{B}$.

\begin{claim}
\label{CL: cut-edge}
$G$ contains no leaf vertices and  $H$ is a tree.
\end{claim}
\begin{proof}
If $G$ contains a leaf vertex, say $x$, then $f_1(e) = 0$ where $e$ is the edge incident with $x$ and $G-x$ remains connected. This contradicts to the minimality of $G$.

Clearly $H$ is connected since $G$ is connected. If $H$ is not a tree, then there is an edge $e \in E(G)$ such that $f_1(e) = 0$ and $G-e$ is connected, a contradiction to the minimality of $G$ again.
\end{proof}

Let $u$ be a leaf vertex of $H$ and $v$ be its neighbor.  By Claim~\ref{CL: cut-edge}, $u \in X\cup Y$.  Since $u$ is a leaf vertex of $H$, there is only one edge in $G$ with one endvertex in $B_u$ and the other one in $B_v$. Let $x_ux_v$ be the only edge  in $G$ where $x_u \in V(B_u)$ and $x_v \in V(B_v)$. 

\begin{claim}
\label{CL:leaf}
 $u\in Y$ and  $v \not \in Y$.
\end{claim}
\begin{proof}
Suppose to the contrary that  either $u \in X$ or $u\in Y$ and $v \in Y$.   Let $G' = G- V(B_u)$.  Since $B_u$ is a leaf block, $G'$ is connected.

If $u\in X$, then  $B_u$ is an even component and thus  $\mathcal{B} -B_u$ and $\mathcal{B}$ have  the same number of odd components. Since $G'$ is connected, by the minimality of $G$, there is an integer $5$-flow $g_1$ of $(G', \sigma |_{E(G')})$ such that $\supp(f_1)-E(B_u)\subseteq \supp(g_1)$ and $g_1$ satisfies (1) and (2).  Then $g_1$ can be considered as a flow of $(G,\sigma)$ under the orientation $\tau$ such that $E(B_u)\cap \supp(g_1) = \emptyset$. Since $B_u$ is an even eulerian component, by Lemma~\ref{eulerian-2-flow}, there is a $2$-flow $g_2$ of $(G,\sigma)$ such that  $\supp(g_2) = E(B_u)$. Therefore $g_1+g_2$  is  a $5$-flow of $(G,\sigma)$ satisfying (1) and (2), a contradiction.

Now assume that $u \in Y$ and $v\in Y$. Then $\mathcal{B} -B_u$ has an even number of odd components.   By Lemma~\ref{TH: 2-to-3}, there is a $3$-flow  $g_3$ such that $\supp(f_1)-E(B_u)\subseteq \supp(g_3)$ and  $g_3$ satisfies (1) and (2). Note that $g_3(x_ux_v) = 0$ and $g_3(e) = 0$ for each $e\in E(B_u)$.    By Lemma~\ref{TH: 2-to-3} again, there is a $3$-flow $g_4$ such that $\supp(g_4) = E(B_u)\cup E(B_v) + x_ux_v$,  $E(B_u) \cup E(B_v) = E_{g_4 = \pm 1}$, and $\{x_ux_v\} = E_{g_4 = \pm 2}$.  Therefore $g_3 + 2g_4$ is a desired $5$-flow, a contradiction. This proves the claim.
\end{proof}

Let $(G_1,\sigma_1)$ be the signed graph obtained from $G - V(B_u)$ by adding a negative loop $e_1$ at $x_v$ where $\sigma_1$ is defined as $\sigma_1(e) = \sigma(e)$ for each $e\in E(G_1)-\{e_1\}$ and $\sigma_1(e_1) = -1$. The orientation $\tau_1$ of $(G_1,\sigma_1)$ is defined as $\tau_1 (h) = \tau(h)$ for each $h \in H(G_1)$ and $h$ is not an half edge of the loop $e_1$; for each half edge $h$ of $e_1$, $\tau_1(h) = \tau(h_{uv}^v)$.  

Let $(G_2,\sigma_2)$ be the signed graph  obtained from $B_u$ by adding a negative loop $e_2$ at $x_u$. Its signature $\sigma_2$  and orientation $\tau_2$ are  defined similarly to $\sigma_1$ and $\tau_1$, respectively.  

Denote   $B_v' = B_v\cup \{e_1\}$ and $\mathcal{B'} = \mathcal{B} - B_u-B_v + B_v'$ if $v \in X\cup Y$; otherwise denote $B_v' = \{e_1\}$  and  $\mathcal{B'} = \mathcal{B} - B_u + B_v'$.    Note that   there is a $\Z_2$-flow of $(G_1,\sigma_1)$ whose support is $\bigcup_{B\in \mathcal{B'}}E(B)$.

   By Claim~\ref{CL:leaf},  both $B_u$ and  $B_v'$  are odd. Thus  $\mathcal{B'} $ and  $\mathcal{B}$ have the same number of odd components.  By the minimality of $G$, there is  a 
 $5$-flow $(\tau_1,g_5)$  of $(G_1, \sigma_1)$    satisfying (1) and (2).

By Claim~\ref{CL:leaf},    $(G_2,\sigma_2)$ is a signed eulerian graph with even number of negative edges. By Lemma \ref{eulerian-2-flow}, there is a  $2$-flow $(\tau_2,g_6)$  of $(G_2,\sigma_2)$ such that $\supp(g_6) = E(G_2)$.  We may assume $g_5(e_1)g_6(e_2) > 0$ otherwise replacing $g_6$ with $-g_6$.  Let $a = g_5(e_1)$. Then $|a| \in \{1,2\}$.

Let $(\tau, g_7)$ be  the integer flow  of $(G,\sigma)$ defined as follows: for each $e \in E(G)$,
$$g_7(e) =  \left\{
\begin{array}{ll}
g_5(e) & \mbox{ if } e\in \supp(g_5);\\
ag_6(e) & \mbox{ if } e\in \supp(g_6);\\
2a & \mbox{ if } e = uv;\\
0       & \mbox{ otherwise. }
\end{array}
\right.
$$

Then $g_7$ is a $5$-flow  of $(G,\sigma)$ satisfying (1) and (2), a contradiction. This completes the proof of the lemma.
\end{proof}

The following lemma is due to Zaslavsky  \cite{Zas1982}.

\begin{lemma}{\rm (Zaslavsky  \cite{Zas1982})}
\label{balanced-extension}
Let $T$ be a spanning tree of a signed graph $(G, \sigma)$. For every $e\notin E(T)$, let $C_e$ be the unique circuit contained in $T+e$. If the circuit $C_e$ is balanced for every $e\notin E(T)$, then $G$ is balanced.
\end{lemma}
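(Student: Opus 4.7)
The plan is to use the classical fact that a signed graph is balanced if and only if it is switching-equivalent to an all-positive graph, and to show directly that the hypothesis forces $(G,\sigma)$ to be switching-equivalent to the all-positive graph on the same underlying graph $G$. Since the parity of negative edges in any circuit is preserved by switching, it suffices to produce a sequence of switches after which every edge of $G$ is positive.

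First I would switch so that every edge of $T$ becomes positive. Root $T$ at an arbitrary vertex $r$, and for each $v\in V(G)$ define $s(v)\in\{\pm 1\}$ to be the product of the signatures of the edges on the unique $(r,v)$-path in $T$, with $s(r)=1$; this is well defined because $T$ is a tree. Now switch every vertex $v$ with $s(v)=-1$, and let $\sigma'$ denote the resulting signature. For any tree edge $e=uv$ with $u$ the parent of $v$, the definition of $s$ gives $s(v)=s(u)\sigma(e)$, and hence $\sigma'(e)=\sigma(e)s(u)s(v)=s(u)^{2}\sigma(e)^{2}=1$. So every edge of $T$ is $\sigma'$-positive.

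Next I would use the hypothesis on fundamental circuits to handle the remaining edges. For each $e=uv\notin E(T)$, the circuit $C_e$ consists of $e$ together with the tree path $P$ from $u$ to $v$. By the previous step every edge of $P$ is $\sigma'$-positive, so the number of $\sigma'$-negative edges of $C_e$ is $0$ when $\sigma'(e)=+1$ and $1$ when $\sigma'(e)=-1$. Because $C_e$ is balanced under $\sigma$ and balance is invariant under switching, $C_e$ is also balanced under $\sigma'$, forcing $\sigma'(e)=+1$. Hence every edge of $G$ is $\sigma'$-positive, so $(G,\sigma')$ contains no negative edge and is trivially balanced; equivalently, $(G,\sigma)$ is balanced.

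The argument is completely routine and I do not foresee a genuine obstacle. The only point that deserves verification is the first step, namely that the vertex labelling $s$ is well defined and that the induced switching simultaneously turns every tree edge positive; both are immediate from the tree structure and the one-line computation above.
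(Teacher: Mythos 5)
Your proof is correct: the potential function $s$ along the rooted tree is well defined, the switching class computation $\sigma'(e)=\sigma(e)s(u)s(v)$ is right, and the fundamental-circuit hypothesis then forces every non-tree edge to be positive, so $(G,\sigma)$ is switching-equivalent to an all-positive graph and hence balanced. The paper itself gives no proof (it cites Zaslavsky), and your argument is the standard one for this classical fact, so there is nothing to reconcile.
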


  The proof of the following lemma is inspired by the proof of Theorem 4.2  in \cite{MS-eulerian} due to  M\'{a}\v{c}ajov\'{a} and \v{S}koviera.

\begin{lemma}\label{HC-cover}
Let $C$ be an unbalanced circuit of a signed graph $(G,\sigma)$.
If $(G,\sigma)$ is flow-admissible and $G-E(C)$ is balanced, then $(G,\sigma)$ has a $4$-flow $f$ satisfying the following:

(1) $E(C) \subseteq \supp(f)$;

(2) In $H = G[\supp(f)]$ the subgraph induced by $\supp(f)$,   each vertex in $V(H)-V(C)$  has degree at most $3$ in $H$ and at most one vertex in $V(H)-V(C)$ has degree $3$.
\end{lemma}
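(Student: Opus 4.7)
The plan begins by switching signs so that $G-E(C)$ becomes all-positive, which is possible because $G-E(C)$ is balanced. In the resulting signed graph every negative edge lies on $C$, and since $C$ is unbalanced, $C$ contains an odd number $m$ of negative edges. Using Lemma~\ref{Flow-amissible} and the hypothesis that $(G,\sigma)$ is flow-admissible, I would rule out $m=1$: if $C$ carried only one negative edge, then $(G,\sigma)$ would be equivalent to a signed graph with exactly one negative edge and hence not flow-admissible. Thus $m\ge 3$, and the problem becomes that of constructing a $4$-flow whose support contains $E(C)$ in the presence of at least three negative edges, all sitting on $C$.

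Next I would build the target $4$-flow $f$ so that its support is $E(C)$ together with a small subgraph $T\subseteq G-E(C)$, where $T$ is either a single path joining two vertices of $V(C)$ (so every internal vertex of $T$ outside $V(C)$ has degree $2$ in $H$) or a Y-tree joining three vertices of $V(C)$ with a single internal branching vertex (which realises the one allowed degree-$3$ vertex outside $V(C)$). To secure such a $T$, I would analyse the components of $G-E(C)$: if every component met $V(C)$ in only one vertex, then a defect-cancellation argument would force every flow on $C$ to have zero boundary contribution at each vertex (the all-positive balanced background cannot transport boundary at a single vertex of any of its components), but on an unbalanced $C$ this would force $f|_C\equiv 0$, contradicting flow-admissibility. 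Consequently some component of $G-E(C)$ meets $V(C)$ in at least two vertices, and a minimal subtree inside such a component connecting two or three vertices of $V(C)$ produces the required $T$; Lemma~\ref{balanced-extension} handles the balanced-piece bookkeeping along the way.

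Given $T$, the integer values $a_e\in\{\pm 1,\pm 2,\pm 3\}$ on $E(C)$ and the flow on $T$ are pinned down by a small linear system: interior vertices of $C$ not in $V(T)$ force successive $C$-flow values to match with signs inherited from $\sigma$, while vertices of $V(C)\cap V(T)$ are defect vertices whose contribution from $f|_C$ must cancel the boundary of the $T$-flow, and the orientation identity $\sum_{e\in E(C)\cap E_N}s_e a_e=0$ gives the single global constraint. With $m\ge 3$ one has enough freedom both to keep every $a_e$ nonzero and to bound all values by $3$, the $T$-flow inheriting the same bound. The main obstacle is verifying that this linear system always admits a solution inside the $4$-flow range: this needs a case analysis on the parities of the negative edges in the arcs of $C$ cut out by $V(C)\cap V(T)$, and on whether a path $T$ suffices or the extra parameter coming from the branching vertex of a Y-tree is required to resolve the system while respecting the degree bound on $V(H)\setminus V(C)$.
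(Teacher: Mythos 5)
Your opening moves (switching so that all negative edges lie on $C$, ruling out a single negative edge via Lemma~\ref{Flow-amissible}, and using Lemma~\ref{balanced-extension} to force components of $G-E(C)$ to attach to $C$ at more than one vertex) match the paper's setup. But the structural heart of your plan --- that the support of $f$ can be taken to be $E(C)$ together with a \emph{single connected} subgraph $T$ (one path or one Y-tree) --- is false in general, and this is a genuine gap rather than a detail. If $T$ is a path whose endpoints are the only vertices it shares with $C$, then $C\cup T$ is a signed theta graph whose underlying graph has $|E|=|V|+1$; since $C$ is unbalanced, exactly one of the two arcs of $C$ cut off by the endpoints of $T$ carries an odd number of negative edges, so $C\cup T$ is unbalanced and its flow space has dimension $|E|-|V|=1$, spanned by the unique balanced circuit (one arc plus $T$). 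Every flow on $C\cup T$ is therefore supported inside that balanced circuit and misses the other arc entirely, so $E(C)\subseteq\supp(f)$ is impossible. Your linear system has no solution here, no matter how the values are chosen. A concrete instance where no connected $T$ of any shape can work: let $C$ be a hexagon $v_1\cdots v_6$ with negative edges $v_1v_2$, $v_3v_4$, $v_5v_6$, and add three positive chords $v_1v_4$, $v_2v_5$, $v_3v_6$. Each component of $G-E(C)$ is a single chord meeting $C$ in two vertices, so any connected $T$ is one chord and the theta-graph obstruction applies; a Y-tree is unavailable because no component has three attachments.

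What the paper does instead, and what your proposal is missing, is to allow the extra subgraph to be \emph{disconnected}: a family of pairwise vertex-disjoint paths $P_1,\dots,P_t$ lying in $t$ distinct components of $G-E(C)$. Each component $M$ determines one ``negative segment'' $S_M$ of $C$ (an arc between two attachments carrying an odd number of negative edges); flow-admissibility plus Lemma~\ref{balanced-extension} forces $\bigcap_M E(S_M)=\emptyset$, so the complementary arcs $S_M'$ cover $C$. Choosing a \emph{minimal} such cover (in which every edge of $C$ lies in at most two cosegments) and closing each $S_i'$ into a balanced circuit $C_i=S_i'\cup P_i$, one sums $2$-flows as $\sum_{i=1}^{t-1}f_i+2f_t$ to get a $4$-flow covering $E(C)$ with all external vertices of degree~$2$. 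The Y-tree (and the path re-entering $C$ at a third point) appears only in the degenerate case where a single component determines three or more negative segments, which is exactly why conclusion (2) tolerates at most one degree-$3$ vertex outside $V(C)$. To repair your argument you would need to replace ``a single path or Y-tree'' by this covering family of disjoint paths and prove the minimal-cover properties; the rest of your outline can then be salvaged, but as written the construction step fails.
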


\begin{proof}
Denote by $G'=G-E(C)$. Since $G'$ is balanced, with some switching operations, we may assume that all edges in $E(G')$ are positive and thus $E_N(G, \sigma)\subseteq E(C)$. Fix an orientation $\tau$ of $(G,\sigma)$ and  in the following we always assume the flows are under the orientation $\tau$ or its restriction on according subgraphs.

Let $M$ be a component of $G'$. The circuit $C$ is divided by the vertices of $M$ into segments whose endvertices lie in $M$ and all inner vertices lie outside $M$. An endvertex of a segment is called an \emph{attachment} of $M$.  A segment is called positive (negative) if it contains an even  (odd) number of negative edges.  Let $S$ be a segment. Note that $M\cup S$ is unbalanced (balanced) if and only if the segment $S$ is negative (positive). Since $C$ is unbalanced, the number of negative segments determined by each component $M$  is odd.

We prove  the lemma by contradiction. Suppose to the contrary that $(G,\sigma)$ has no $4$-flow satisfying (1) and (2).

\begin{claim}
Each component  of $G'$  determines exactly one negative segment.
\end{claim}
\begin{proof}
 Suppose to the contrary  that  $M$ determines  more than one negative segments.  Thus $M$ determines at least three negative segments. Let $u_1Cu_1'$, $u_2Cu_2'$, $u_3Cu_3'$ be three consecutive  negative segments (in clockwise) where $u_i$ and $u_i'$ are attachments for $i=1,2,3$. Then $u_1'Cu_2$, $u_2'Cu_3$, $u_3'Cu_1$ all contain even number of negative edges.  This implies that  $C$ can be partitioned into three  negative segments: $u_1Cu_2$, $u_2Cu_3$, and $u_3Cu_1$.

 We first show that no  $(u_1,u_2)$-path in $M$  passes through $u_3$.  Otherwise let $P$ be a $(u_1,u_2)$-path in $M$ that passes through $u_3$.  Then $C_1= u_1Cu_3+ u_1Pu_3$ and $C_2= u_3Cu_2+ u_3Pu_2$ both are balanced circuits.  By Lemma~\ref{eulerian-2-flow}, there  is a $2$-flow $f_i$ of $(G,\sigma)$  such that $\supp(f_i) = E(C_i)$ for each $i = 1,2$. Therefore  $2f_1+f_2$ is a $4$-flow of $(G,\sigma)$ and $\supp(2f_1 + f_2) = E(C)\cup E(P)$, which  is a desired $4$-flow, a contradiction.

 By symmetry, no $(u_i,u_j)$-path passes through $u_k$ where $\{i,j,k\} = \{1,2,3\}$.  This implies  that $u_1$ and $u_2$ are not adjacent.  Otherwise, a $(u_1,u_3)$-path together with $u_1u_2$ gives a $(u_2,u_3)$-path containing $u_1$.

 Let $P_1$ be a $(u_1,u_2)$-path. Since $M$ is connected, there is a path $P_2$ from $u_3$ to $P_1$ such that $|V(P_2)\cap V(P_1)| =1$. Let $v$ be the only common vertex in $P_1$ and $P_2$.  Then  $C,  P_1$, and $P_2$  form a  signed graph as illustrated in  Figure \ref{FIG: 1} which has a   desired $4$-NZF, a contradiction again. This completes  the proof of the claim.
 \end{proof}

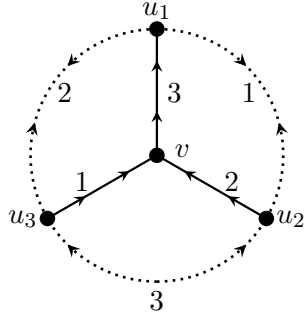
\begin{figure}
\begin{center}
\begin{tikzpicture}[scale=1.2]
\path (0:0)   coordinate (v);  \draw [fill=black] (v) circle (0.08cm);
\path (-30:1.4) coordinate (u2); \draw [fill=black] (u2) circle (0.08cm);
\path (90:1.4) coordinate (u1); \draw [fill=black] (u1) circle (0.08cm);
\path (-150:1.4) coordinate (u1'); \draw [fill=black] (u1') circle (0.08cm);
\path (-90:1.4) coordinate (w);
\draw [redirected, directed,line width=0.85] (v)--(u1);
\draw [redirected, directed,line width=0.85] (u2)--(v);
\draw [redirected, directed,line width=0.85] (u1')--(v);

\draw [directed, dotted, line width=1]  (u2) arc(-30:30:1.4);
\draw [directed, dotted, line width=1]  (u1) arc(90:30:1.4);
\draw [directed, dotted, line width=1]  (u1) arc(90:150:1.4);
\draw [directed, dotted, line width=1]  (u1') arc(210:150:1.4);
\draw [directed, dotted, line width=1]  (w) arc(-90:-30:1.4);
\draw [directed, dotted, line width=1]  (w) arc(-90:-150:1.4);

\node[right]at (30:0.1) {$v$};
\node[above]at (90:1.4) {$u_1$};
\node[right]at (-30:1.4) {$u_2$};
\node[left] at (-150:1.4) {$u_3$};

\node[right]at (90:0.7) {$3$};\node[right]at (-25:0.7) {$2$};
\node[left]at (-155:0.7) {$1$};\node[below]at (-90:1.4) {$3$};
\node[left]at (30:1.4) {$1$};\node[right]at (150:1.4) {$2$};
\end{tikzpicture}
\end{center}
\caption{\small a 4-flow covers $C$}
\label{FIG: 1}
\end{figure}

Let $\mathcal{M}$ denote the set of all components of $G'$. For each component $M$, denote by $S_M=uCv$ the negative segment determined by $M$ where $u$ and $v$ are two attachments of $M$ on $C$. Denote by $S_M' = vCu$ the cosegment  of $S_M$. Then $E(S_M) \not = \emptyset$ and $E(S_M' ) = E(C) - E(S_M)$.

\begin{claim}
$\bigcap_{M\in \mathcal{M}} E(S_M)=\emptyset$. Therefore  $\bigcup _{M\in \mathcal{M}} E(S'_M)= C$ and $|\mathcal{M}|\geq 2$.
\end{claim}
\begin{proof}
Suppose to the contrary  $\bigcap_{M\in \mathcal{M}} E(S_M) \not =\emptyset$. Let $e^*\in\bigcap_{M\in \mathcal{M}} E(S_M)$.  Then there is a spanning tree $T$ of $G-e^*$ containing the path $P^*=C-e^*$.  Let $e=uv \in E(G)-e^*-E(T)$.  Denote the unique circuit contained in $T+e$ by $C_e$.

 If $E(C_e)\cap E(P^*)=\emptyset$, then $C_e$  contains no negative edges and thus  is balanced.

 Assume that $C_e$ and $P^*$ have common edges. Since $T$ contains all the edges in $C-e^*$,  $E(C_e)\cap E(C)$ is a path  $P$ on $C$. Let $u'$ and $v'$ be the two endvertices of $P$ in clockwise order on $C$. Then $C_e[(V(C_e)- V(P)) \cup \{u',v'\}]$ is a also a path and thus it is contained in some component $M\in \mathcal{M}$.  This implies that $u'$ and $v'$ are two attachments of $M$ on $C$.  Since $e^*$ belongs to the only negative segment of $C$ determined by $M$, $u'Cv'$ is the union of some positive segments of $C$ determined by $M$.  Therefore $C_e$ has an even number of negative edges and thus is balanced.
  By Lemma \ref{balanced-extension}, $G-e^*$ is balanced, contradicting Lemma \ref{Flow-amissible}. This proves $\bigcap_{M\in \mathcal{M}} E(S_M)=\emptyset$.

Since  $E(S_M') = E(C) - E(S_M)$ and  $\bigcap_{M\in \mathcal{M}} E(S_M)=\emptyset$, we have $\bigcup_{M\in \mathcal{M}} E(S_M')= C$.

Since $E(S_M) \not = \emptyset$ and $\bigcap_{M\in \mathcal{M}} E(S_M)=\emptyset$, we have  $|\mathcal{M}| \geq 2$.
\end{proof}

Let $\mathcal {S}=\{S'_1,S'_2,\ldots,S'_t\}$ be a minimal cosegment cover of $C$. Then $S_i'\not \subseteq S_j'$ for any $i, j$. 

\begin{claim}
\label{Mini-cover}
(i) For each pair $i,j\in \{1,2,\ldots,t\}$, either $S'_i\cap S'_j$   consists  of some nontrivial paths or $S_i'$ and $S_j'$ are vertex-disjoint;

(ii) Each  edge $e\in E(C)$ is contained in  at most two cosegments.
\end{claim}
\begin{proof}
(i) \ Note that  for any two segments $S_i$ and $S_j$,  their endvertices belong to two vertex-disjoint components $M_i$ and $M_j$.    Thus  no component of  $S_i' \cap S'_j$ is an isolated vertex. This proves (i).

(ii) \ Suppose to the contrary that there is an edge $e = uv$ that belongs to three cosegments, say $S_1'$, $S_2'$, $S_3'$. Let $S_i'=u_iCv_i$ for each $i \in \{1,2,3\}$.   Without loss of generality, we may assume that $u_1,u_2,u_3,u,v$ appear in this clockwise cyclic order. Then there exists a pair $i, j$ such that $u_iCv_j$ contains all the $u_l,v_l's$ ($l\in \{1,2,3\}$) and $u,v$. Hence, there is a $k \in \{1,2,3\}-\{i,j\}$ such that $u_k$ and $v_k$ are properly included in $u_iCv_j$. In this case, either $\mathcal {S}\setminus \{S_k'\}$ is still a cover of $C$ or $S_k'\cup S_i' = C$, both in contradiction to the minimality of $\mathcal {S}$.
 This completes the proof of the claim.
\end{proof}

\noindent
{\bf The final step.}  For each $i=1,\dots, t$,  denote by $S_i'=x_iCy_i$ and let $P_i$ be a path in $M_i$  connecting $x_i$ and $y_i$. Then $C_i = S_i'\cup P_i$ is a balanced eulerian subgraph.  By Claim~\ref{Mini-cover}, we may assume that  the vertices $x_1, y_t,  x_2, y_1, \dots, x_t, y_{t-1}, x_1$ appear on $C$ in the acyclic order.
 Then  $C_i\cap C_j \not = \emptyset$ if and only if  $|j-i|  \equiv  1 \pmod t$. Moreover  $C_i\cap C_{i+1}  = x_{i+1}Cy_i$ where the subindices are taken modulo $t$. See Figure~\ref{FIG: 3}  for an illustration with $t = 5$.

\begin{figure}
\begin{center}
\begin{tikzpicture}[scale=1.2]
\path (180:1.5)   coordinate (x5);  \draw [fill=black] (x5) circle (0.06cm);
\path (144:1.5)   coordinate (y4);  \draw [fill=black] (y4) circle (0.06cm);
\path (108:1.5)   coordinate (x1);  \draw [fill=black] (x1) circle (0.06cm);
\path (72:1.5)   coordinate (y5);  \draw [fill=black] (y5) circle (0.06cm);
\path (36:1.5)   coordinate (x2);  \draw [fill=black] (x2) circle (0.06cm);
\path (0:1.5)   coordinate (y1);  \draw [fill=black] (y1) circle (0.06cm);
\path (-36:1.5)   coordinate (x3);  \draw [fill=black] (x3) circle (0.06cm);
\path (-72:1.5)   coordinate (y2);  \draw [fill=black] (y2) circle (0.06cm);
\path (-108:1.5)   coordinate (x4);  \draw [fill=black] (x4) circle (0.06cm);
\path (-144:1.5)   coordinate (y3);  \draw [fill=black] (y3) circle (0.06cm);
\path (0:1.5)   coordinate (v);
\draw [ line width=1]  (v) arc(0:360:1.5);
\draw [ line width=1]  (x1) arc(-180:-72:1.5);
\draw [ line width=1]  (x2) arc(108:216:1.5);
\draw [ line width=1]  (x3) arc(36:144:1.5);
\draw [ line width=1]  (x4) arc(-36:72:1.5);
\draw [ line width=1]  (x5) arc(-108:0:1.5);
\node[above]at (108:1.5) {$x_1$};
\node[right]at (36:1.5) {$x_2$};
\node[right]at (-36:1.5) {$x_3$};
\node[below]at (-108:1.5) {$x_4$};
\node[left]at (-180:1.5) {$x_5$};

\node[below]at (-90:2.8) {\small (a) a minimal cosegment cover with $t = 5$};
\node[right]at (0:1.5) {$y_1$};
\node[below]at (-72:1.5) {$y_2$};
\node[left]at (-144:1.5) {$y_3$};
\node[left]at (144:1.5) {$y_4$};
\node[right]at (78:1.65) {$y_5$};

\node at (0:2.7){$\Longrightarrow$};

\path (0:3.5)   coordinate (w);
\path (3.8,0.7) coordinate (y5);\draw [fill=black] (y5) circle (0.06cm);
\path (3.8,-0.7) coordinate (x5);\draw [fill=black] (x5) circle (0.06cm);
\path (5.2,0.7) coordinate (x1);\draw [fill=black] (x1) circle (0.06cm);
\path (5.2,-0.7) coordinate (y4);\draw [fill=black] (y4) circle (0.06cm);
\draw [directed,redirected, line width=1] (x1) arc(45:-45:1);
\draw [directed,redirected, line width=1] (y4) arc(-45:-135:1);
\draw [directed,redirected, line width=1] (x5) arc(-135:-225:1);
\draw [directed,redirected, line width=1] (y5) arc(135:45:1);

\node[above] at (4.5,1){$1$};\node[below] at (4.5,-1){$1$};
\node[left] at (3.5,0){$1$};\node[right] at (5.5,0){$1$};
\node at (4.5,0){$C_5$};
\node[below] at (4.5,-2.8){\small (b) 2-flow $f_5$ of $C_5$};

\node[left]at (3.8,0.8) {$y_5$};\node[left]at (3.8,-0.8) {$x_5$};
\node[right]at (5.2,0.8) {$x_1$};\node[right]at (5.2,-0.8) {$y_4$};
\path (7,0) coordinate (v2);\draw [fill=black] (v2) circle (0.06cm);
\path (8,0) coordinate (u3);\draw [fill=black] (u3) circle (0.06cm);
\path (8,1) coordinate (v1);\draw [fill=black] (v1) circle (0.06cm);
\path (7,1) coordinate (u2);\draw [fill=black] (u2) circle (0.06cm);
\path (7,2) coordinate (v5);\draw [fill=black] (v5) circle (0.06cm);
\path (8,2) coordinate (u1);\draw [fill=black] (u1) circle (0.06cm);
\path (8,-1) coordinate (v3);\draw [fill=black] (v3) circle (0.06cm);
\path (7,-1) coordinate (u4);\draw [fill=black] (u4) circle (0.06cm);
\path (7,-2) coordinate (v4);\draw [fill=black] (v4) circle (0.06cm);
\path (8,-2) coordinate (u5);\draw [fill=black] (u5) circle (0.06cm);
\draw [directed, redirected,line width=1]  (u3) arc(-30:30:1);
\draw [directed, redirected,line width=1]  (u3) arc(30:-30:1);

\draw [directed, redirected, line width=1]  (u4) arc(210:150:1);
\draw [directed, redirected, line width=1]  (u2) arc(150:210:1);
\draw [directed, redirected, line width=1]  (u4) arc(150:210:1);
\draw [directed, redirected, line width=1]  (u5) arc(-30:30:1);
\draw [directed, redirected, line width=1]  (u2) arc(210:150:1);
\draw [directed, redirected, line width=1]  (u1) arc(30:-30:1);
\draw [directed, redirected, line width=1]  (v2)--(u3);
\draw [directed, redirected, line width=1]  (v1)--(u2);
\draw [directed, redirected, line width=1]  (v5)--(u1);
\draw [directed, redirected, line width=1]  (v3)--(u4);
\draw [directed, redirected, line width=1]  (v4)--(u5);
\node[left]at (7,0) {$y_2$};
\node[left]at (7,1) {$x_2$};
\node[left]at (7,2) {$y_5$};
\node[left]at (7,-1) {$x_4$};
\node[left]at (7,-2) {$y_4$};
\node[right]at (8,0) {$x_3$};
\node[right]at (8,1) {$y_1$};
\node[right]at (8,2) {$x_1$};
\node[right]at (8,-1) {$y_3$};
\node[right]at (8,-2) {$x_5$};
\node at (7.5,0.6) {$C_2$};\node at (7.5,1.6) {$C_1$}; \node at (7.5,-0.4) {$C_3$};
\node at (7.5,-1.4) {$C_4$};
\node[right]at (8.1,0.5) {\small $1$};\node[right]at (8.1,1.5) {\small $1$};
\node[right]at (8.1,-0.5) {\small $1$};\node[right]at (8.1,-1.5) {\small $1$};
\node[left]at (6.9,0.5) {\small $1$};\node[left]at (6.9,1.5) {\small $1$};
\node[left]at (6.9,-0.5) {\small $1$};\node[left]at (6.9,-1.5) {\small $1$};
\node[above]at (7.5,2) {\small $1$};\node[below]at (7.5,-2) {\small $1$};
\node[above]at (7.5,0) {\small $1+1$};\node[above]at (7.5,1) {\small $1+1$};
\node[above]at (7.5,-1) {\small $1+1$};
\node[below]at (7.5,-2.8) {\small (c) 2-flows $f_1,f_2,f_3,f_4$ };
\end{tikzpicture}

\end{center}
\caption{\small Minimum cosegment cover  and $4$-flow}
\label{FIG: 3}
\end{figure}
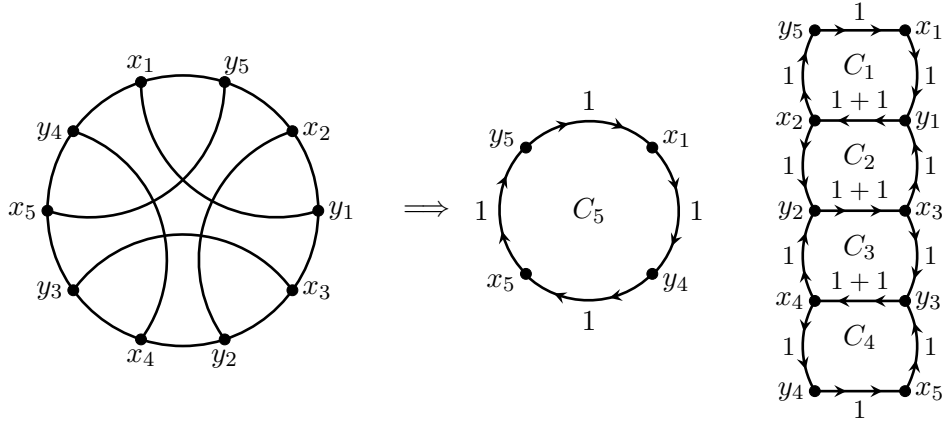

 For each $i \in \{1,2,\dots, t\}$, let $(\tau, f_i)$ be a $2$-flow  of $(G,\sigma)$  such that  $\supp(f_i) = E(C_i)$. We may assume that  for each $i =1,\dots, t-1$,  $f_i(e) = f_{i+1}(e)$ for each $e \in E(C_i)\cap E(C_{i+1})$. Then $\phi = \sum_{i=1}^{t-1}f_i + 2f_t$ is a $4$-flow  of $(G,\sigma)$ satisfying  $E(C) \subseteq \supp(\phi)= E(C)\cup [\cup_{i=1}^t E(P_i)]$.  Since $P_1,\dots, P_t$ belong to different components of $G'$, they are pairwise vertex-disjoint. Thus for each vertex $v \in V(\supp(\phi))-V(C)$, the degree of $v$ in $\supp(\phi)$ is two. Therefore $\phi$ is a  $4$-flow satisfying (1) and (2), a contradiction to the assumption that $(G,\sigma)$ is a counterexample.  This  contradiction completes the proof of the lemma.
\end{proof}

The proof of the following lemma is straightforward and thus  is omitted.
\begin{lemma}
\label{LE: flow-extension}
Let $(G,\sigma)$ be a signed graph  and $C$ be a chordless circuit whose edges are all positive. Suppose that $2\leq |\delta(V(C))| \leq 3$ and $k\geq 4$ is an integer. If $(G/C,\sigma)$  has a $k$-NZF $f$, then $f$ can be extended to be a $k$-NZF of $(G,\sigma)$.
\end{lemma}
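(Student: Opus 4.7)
The plan is to assign flow values to the edges of $C$ so that the resulting map is a $k$-NZF of $(G,\sigma)$ extending $f$. Since $C$ is chordless and all its edges are positive, $(C,\sigma|_C)$ behaves like an ordinary circuit. Every vertex of $C$ not incident to an edge in $\delta(V(C))$ has degree exactly $2$ in $G$, so flow conservation at such a vertex forces the flow to be constant along each maximal ``arc'' of $C$ whose interior vertices avoid the attachments of $\delta(V(C))$. The task therefore reduces to choosing nonzero integers in $\{\pm 1,\dots,\pm(k-1)\}$ as constant arc-flows, subject to conservation at each attachment vertex; collectively these conservation equations at attachments of $C$ are exactly the conservation equation at the contracted vertex $v_C$ in $G/C$, which is already satisfied by $f$.

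First consider $|\delta(V(C))|=2$, with edges $e_1,e_2$ attached at $u_1,u_2\in V(C)$ (possibly equal). Let $a$ denote the signed contribution of $e_1$ at $u_1$; by conservation at $v_C$, the contribution of $e_2$ at $u_2$ is $-a$, and $1\le|a|\le k-1$. If $u_1\ne u_2$, the two arcs of $C$ carry constant flows $a_1,a_2$ with $a_1+a_2=a$. I take $(a_1,a_2)=(1,a-1)$ whenever $a\ne 1$, and $(2,-1)$ when $a=1$ (with symmetric choices for $a<0$); since $k\ge 4$, both entries lie in $\{\pm 1,\pm 2\}$ and are nonzero. If $u_1=u_2$, conservation is automatic and any constant $t\in\{1,\dots,k-1\}$ on $E(C)$ suffices.

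Now consider $|\delta(V(C))|=3$, with attachments $u_1,u_2,u_3$ and signed inflows $a_1,a_2,a_3$ summing to zero. If all three $u_i$ coincide, any constant $t\in\{1,\dots,k-1\}$ on $E(C)$ works. If exactly two coincide, I merge the two corresponding inflows and fall into the $|\delta(V(C))|=2$ case. If $u_1,u_2,u_3$ are distinct, they split $C$ into three arcs with constant flows $x_1,x_2,x_3$; solving the conservation system leaves one free parameter $x_1=t$ with $x_2=t-a_2$ and $x_3=t+a_1$ (after fixing orientations). The bound $|x_i|\le k-1$ confines $t$ to an integer interval of size $2k-1-(M-m)$, where $M=\max(0,a_2,-a_1)$ and $m=\min(0,a_2,-a_1)$. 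A brief case check using $a_1+a_2+a_3=0$ and $|a_i|\le k-1$ gives $M-m\le k-1$ in each of the four sign-patterns of $(a_2,-a_1)$, so the interval has at least $k\ge 4$ integer points; excluding the at most three forbidden values $\{0,a_2,-a_1\}$ (which make some $x_i$ vanish) still leaves an admissible $t$.

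The only mildly delicate step is the interval estimate in the last subcase of $|\delta(V(C))|=3$; the rest of the construction is entirely mechanical, consistent with the author's remark that the proof is straightforward. The hypothesis $k\ge 4$ is invoked precisely to ensure enough slack for the free parameter $t$ when the inflows $a_i$ are near the extremes $\pm(k-1)$.
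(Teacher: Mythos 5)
Your proof is correct; the paper itself omits the argument as ``straightforward,'' and what you have written is precisely the natural argument being alluded to: reduce to constant flows on the arcs of $C$ between attachment vertices, note that the attachment conservation equations form a one-parameter family whose solvability is guaranteed by conservation at the contracted vertex $v_C$, and count admissible values of the free parameter. Your key estimate is sound, since $M-m=\max(|a_1|,|a_2|,|a_3|)\le k-1$ follows from $a_1+a_2+a_3=0$ and $|a_i|\le k-1$, so the interval for $t$ contains at least $2k-1-(k-1)=k\ge 4$ integers, more than the at most three forbidden values; the only cosmetic caveat is that in the two-attachment case the choice $(1,a-1)$ must indeed be negated for $a<0$ (as you indicate) to keep $|a-1|\le k-1$.
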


\section{Proofs of Theorem \ref{THM-10-flow2} and Corollary~\ref{COR}}
\label{SEC:cubic}
Let's first recall  Theorem \ref{THM-10-flow2}.

\medskip \noindent
{\bf Theorem \ref{THM-10-flow2}}
{\it Let $(G,\sigma)$ be a  connected  $3$-edge-colorable cubic signed graph and $E_N(G, \sigma)$ be the set of negative edges in $(G,\sigma)$. Let $R, B, Y$ be  the three color classes such that $|R\cap E_N(G, \sigma)| \equiv |B\cap E_N(G,  \sigma)| \pmod 2$. If $(G,\sigma)$ is flow-admissible,  then it has a nowhere-zero $8$-flow  unless $R\cup B$ contains no unbalanced  circuits and  the numbers of unbalanced circuits in  $R\cup Y$ and $B\cup Y$  are both odd and at least $3$, in which case it has a nowhere-zero $10$-flow.  }

\begin{proof} Let $\tau$ be an orientation of $(G,\sigma)$. In the following we always assume the flows are under the orientation $\tau$ or its restriction on according subgraphs.
 Denote by $M_1M_2$ the $2$-factor induced by $M_1\cup M_2$ for each pair $M_1, M_2\in\{R, B, Y\}$. Since  $|R\cap E_N(G, \sigma)| \equiv |B\cap E_N(G, \sigma)| \pmod 2$, $RB$ has an even number of odd components.

\medskip \noindent
{\bf Case 1.} $RB$ contains an unbalanced circuit.

Then  by Lemma~\ref{TH: 2-to-3},  $(G,\sigma)$ has a $3$-flow  $(\tau, f_1)$ such that $RB = E_{f_1 = \pm 1}$ and $|f_1(e)| =  2$ only if $e \in Y$.

\medskip \noindent
{\bf Subcase 1.1.} $|Y\cap E_N(G, \sigma)|\equiv |R\cap E_N(G,\sigma)|\equiv|B\cap E_N(G,\sigma)|\pmod 2$.

Then  $RY$ has an even number of unbalanced circuits.
  By Lemma~\ref{TH: 2-to-3}  again,  $(G,\sigma)$ has a $3$-flow  $(\tau f_2)$ such that $RY = E_{f_2 = \pm 1}$ and $|f_2(e)| =  2$ only if $e \in B$.

Then $f = f_1 + 3f_2$  is a  $9$-NZF of $(G,\sigma)$. Since $E_{f_2= \pm 2} \cap E_{f_1 = \pm 2} = \emptyset$, $|f(e)| \not =  8$. Thus $f$ is indeed an $8$-NZF of $(G,\sigma)$.

\medskip \noindent
{\bf Subcase 1.2.} $RY$ or $BY$ has an odd number of unbalanced circuits. 

In this case, both  $RY$ and $BY$ have an odd number of unbalanced circuits.

Let $C$ be an unbalanced circuit in $RB$. Let  $R'=R\bigtriangleup C$ and $B'=B\bigtriangleup C$ with $R$ and $B$, respectively (this is equivalent to swap colors $R$ and $B$ on $C$). This implies that $|Y\cap E_N(G, \sigma)|\equiv |R'\cap E_N(G, \sigma)|\equiv|B'\cap E_N(G, \sigma)|\pmod 2$. We are back to Subcase 1.1.

\medskip \noindent
{\bf Case 2.} $RB$ contains  no  unbalanced circuit.

 Then by Lemma~\ref{eulerian-2-flow},  $(G,\sigma)$ has a $2$-flow  $f_3$ such that $\supp(f_3) = RB$.

\medskip \noindent
{\bf Subcase 2.1.} The  number of unbalanced circuits in $RY$ or $BY$ is even.

 Let $f_2$ be the $3$-flow in Subcase 1.1. Then  $\supp(f_2) \cup \supp(f_3) = E(G)$. Thus $3f_3 + f_2$ is a $6$-NZF of $(G,\sigma)$.

\medskip\noindent
{\bf Subcase 2.2.} The  number of unbalanced circuits in $RY$ or $BY$ is   equal to one.

  By symmetry, assume that $RY$ has exactly one odd component, say $C_1$. Let $\mathcal{C}=\{C_1, \dots, C_t \}$ be the set of components of $RY$, where each $C_i$ ($i \geq 2$) is balanced, and, with some switching operations, we may assume that the  edges of each $C_i$ ($i \geq 2$) are all  positive. Let $H$ be the signed graph obtained from $(G,\sigma)$ by contracting $\mathcal{C} - C_1$. Then $V(H)$ can be partitioned  into $K$ and $\overline{K}$, where $K=V(C_1)$ and $\overline{K}$ is the set of vertices corresponding to the balanced circuits in $\mathcal{C}$.  For $u\in \overline{K}$, denote the corresponding circuit in $\mathcal{C}$ by $C_u$. Since $G$ is flow-admissible, $H$ remains flow-admissible. Note that $C_1$ is an unbalanced circuit in $H$.

     We consider the following two cases.

\medskip
 {\bf Subcase 2.2.1.} $H$  contains an unbalanced circuit $C'$ that is edge-disjoint from $C_1$.

Since $G$ is cubic,  $C'$ is vertex-disjoint from $C_1$. Thus there is a long barbell $Q$  in $H$ with $P$ as the path connecting $C_1$ and $C'$.  Let $\tau_1$ be the orientation of $Q$ which is a restriction of $\tau$ on $H(Q)$. By Lemma~\ref{TH: 2-to-3}, let $(\tau_1, f'')$ be a $3$-NZF in $Q$. Since $d_Q(u)=2$ or 3 for any $u\in V(Q) - V(C_1)$, $u$ is corresponding to an all-positive circuit $C_u$ in $(G,\sigma)$ with $|\delta_Q (V(C_u))|=2$ or $3$.  Hence by Lemma~\ref{LE: flow-extension} we can extend $f''$ to a 4-flow $f'$  $(G,\sigma)$ with $\bigcup_{u\in V(Q)}E(C_u)\cup E(C_1)\subseteq \supp(f')$. Since  for each $v\in V(H) - V(Q)$, $C_v$ is a balanced circuit in $(G, \sigma)$ , $(G,\sigma)$  admits a  $2$-flow $\phi_v$ with $E(C_v)=\supp(\phi_v)$. Thus $f_4=f'+\sum_{u\in V(H) - V(Q)}\phi_u$ is a $4$-flow of $(G,\sigma)$ with $RY\subseteq \supp(f_4)$. Therefore,  $f_3+ 2 f_4$ is an $8$-NZF  of $(G,\sigma)$.

\medskip  {\bf Subcase 2.2.2.}
$H$  contains no unbalanced circuit  that is edge-disjoint from $C_1$.

In this case,  $H-E(C_1)$ is balanced and thus $G-E(C_1)$ is balanced. With some switching operations we may assume $E_N (G, \sigma) \subseteq E_G(C_1)$.  By Lemma~\ref{HC-cover},  $(G,\sigma)$ has  a $4$-flow $f''$ such that $C_1 \subseteq \supp(f'')$ and every vertex in $\supp(f'')-E(C_1)$ has degree at most $3$ in $H$. By Lemma~\ref{LE: flow-extension}, we can extend $f''$ to a $4$-flow  $f_5$  of $(G,\sigma)$ with $RY\subseteq \supp(f_5)$ in $(G, \sigma)$. Therefore,  $f_3+2f_5$ is an $8$-NZF of $(G,\sigma)$.

 \medskip \noindent
 {\bf Subcase 2.3.} The  number of unbalanced circuits in $RY$ or $BY$ is odd and is at least $3$.

 By symmetry, assume that the  number of unbalanced circuits in $RY$  is odd and is at least $3$.   By Lemma~\ref{flow of g-barbell},  $(G,\sigma)$ has a $5$-flow $f_6$ such that $RY \subseteq \supp(f_6)$ and $E_{f_6 = \pm 4} \subseteq B$. Then $\supp(f_3) \cup \supp(f_6) = E(G)$. Thus $5f_3 + f_6$ is a $10$-NZF of $(G,\sigma)$.
\end{proof}

Next we will prove Corollary ~\ref{COR}.

\medskip \noindent
{\bf Corollary~\ref{COR}} 
Every flow-admissible  bridgeless planar  signed graph admits a nowhere-zero $10$-flow.

\begin{proof}
Let $(G,\sigma)$ be a flow-admissible bridgeless planar signed graph.  Let $\tau$ be an orientation of $(G,\sigma)$. In the following we always assume the flows are under the orientation $\tau$ or its restriction on according subgraphs.

We may assume that the minimum degree of $G$ is at least $3$ otherwise we can suppress all degree $2$ vertices.  We may also assume that $G$ contains no positive loops.

 If $G$ is cubic, then by Theorem~\ref{THM-10-flow} and the $4$-color theorem, $G$ admits a nowhere-zero $10$-flow.  

Suppose that $G$ is not cubic and that $G$ is already embedded in a sphere.  Let $v$ be a vertex with $d_G(v) \geq 4$ and  $t$ be the number of negative loops adjacent to $v$.  First delete the $t$ negative loops and then  blow up  $v$ into a circuit  $C_v$   of length $d_G(v) - 2t$ where each edge of  $C_v$   is positive.   Let $xy$ be an edge  in $C_v$. Replace it with a subdivided edge $u_0u_1u_2\cdots u_{2t+1}$ where $x=u_0$ and $y=u_{2t+1}$  and then replace each $u_iu_{i+1}$ with an unbalanced  digon for each $i =1,3,\dots, 2t-1$ (see Figure ~\ref{FIG: 4}). 
  Let $(G', \sigma')$ be the resulting signed graph obtained from $(G,\sigma)$ by applying the above operations on each vertex in $G$ of degree at least $4$. Then $(G', \sigma')$ is cubic, planar, and  flow-admissible. By Theorem~\ref{THM-10-flow} and the $4$-color theorem, $(G', \sigma')$ admits a nowhere-zero $10$-flow.  Note that  $(G,\sigma)$ can be obtained from $(G',\sigma')$ by contracting  an all-positive subgraph of $(G', \sigma')$. Thus  $(G,\sigma)$ admits a nowhere-zero $10$-flow.
\end{proof}
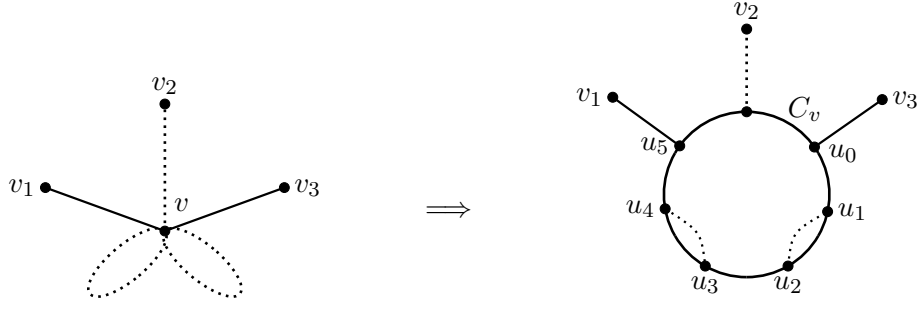
\begin{figure}
\begin{center}
\begin{tikzpicture}[scale=1.3]
\path (0:0)   coordinate (v);  \draw [fill=black] (v) circle (0.05cm);
\path (20:1.3) coordinate (u2); \draw [fill=black] (u2) circle (0.05cm);
\path (160:1.3) coordinate (u3); \draw [fill=black] (u3) circle (0.05cm);
\path (90:1.3) coordinate (u1); \draw [fill=black] (u1) circle (0.05cm);

\draw [dotted,line width=1] (v)--(u1);
\draw [line width=0.85] (u2)--(v);
\draw [line width=0.85] (u3)--(v);
\draw[dotted,line width=1,rotate=310] (0,-0.5)
ellipse (0.18 and 0.5);
\draw[dotted,line width=1,rotate=50] (0,-0.5)
ellipse (0.18 and 0.5);
\node[above]at (30:0.2) {$v$};
\node[right] at (20:1.3) {$v_3$};
\node[above] at (90:1.3) {$v_2$};
\node[left] at (160:1.3) {$v_1$};

\end{tikzpicture}\hspace{1cm}
\begin{tikzpicture}
\path (0:0)   coordinate (v);
\node[above]at (0,1) {$\Longrightarrow$};
\end{tikzpicture}\hspace{1cm}
\begin{tikzpicture}[scale=1.1]
\path (0:0)   coordinate (v);  
\path (-30:1) coordinate (u2); 
\path (90:1) coordinate (u1); \draw [fill=black] (u1) circle (0.06cm);
\path (-150:1) coordinate (u1');

\path (-90:1) coordinate (w);
\path (35:1) coordinate (v1); \draw [fill=black] (v1) circle (0.06cm);
\path (-12:1) coordinate (v2); \draw [fill=black] (v2) circle (0.06cm);
\path (-60:1) coordinate (v3); \draw [fill=black] (v3) circle (0.06cm);
\path (-120:1) coordinate (v4); \draw [fill=black] (v4) circle (0.06cm);
\path (-170:1) coordinate (v5); \draw [fill=black] (v5) circle (0.06cm);
\path (144:1) coordinate (v6); \draw [fill=black] (v6) circle (0.06cm);
\path (144:2) coordinate (u6);   \draw [fill=black] (u6) circle (0.06cm);
\path (90:2) coordinate (u7);  \draw [fill=black] (u7) circle (0.06cm);
\path (35:2) coordinate (u8);  \draw [fill=black] (u8) circle (0.06cm);

\draw [line width=1]  (u2) arc(-30:30:1);
\draw [line width=1]  (u1) arc(90:30:1);
\draw [line width=1]  (u1) arc(90:150:1);
\draw [line width=1]  (u1') arc(210:150:1);
\draw [line width=1]  (w) arc(-90:-30:1);
\draw [line width=1]  (w) arc(-90:-150:1);

\node[right]at (70:1.1) {$C_v$};
\node[left] at (-170:1) {$u_4$};
\node[left,below]at (-120:1) {$u_3$};
\node[below]at (-60:1) {$u_2$};
\node[right] at (-12:1) {$u_1$};
\node[right] at (30:1) {$u_0$};
\node[left] at (144:1) {$u_5$};
\node[left] at (144:2) {$v_1$};
\node[above] at (90:2) {$v_2$};
\node[right] at (35:2) {$v_3$};

\draw [line width=0.85] (u6)--(v6);

\draw [dotted,line width=1] (u1)--(u7);
\draw [line width=0.85] (u8)--(v1);
\draw [dotted,line width=0.85] (v5) .. controls (-0.55,-0.45)..(v4);
\draw [dotted,line width=0.85] (v3) .. controls (0.55,-0.45)..(v2);
\end{tikzpicture}
\end{center}
\caption{\small blowing up of a vertex $v$ with $d(v) = 7$ and $t=2$. Dotted lines are negative edges. }
\label{FIG: 4}
\end{figure}

\section{Proof of Theorem \ref{THM-9-flow}}
\label{SEC:hamilton}

Let's first recall Theorem \ref{THM-9-flow}.

\medskip \noindent
{\bf Theorem \ref{THM-9-flow}}  {\it If $(G,\sigma)$ is a flow-admissible  hamiltonian signed graph, then $(G,\sigma)$ admits a nowhere-zero 8-flow.}

\begin{proof} Let $\tau$ be an orientation of $(G,\sigma)$. In the following we always assume the flows are under the orientation $\tau$ or its restriction on according subgraphs. Let $C_0$ be a hamiltonian  circuit of $G$.  We consider two cases according to whether $C_0$ is balanced or unbalanced.

\medskip \noindent
{\bf
Case 1. }
$C_0$ is balanced.

We may assume that  $C_0$ is all-positive with  some switching operations. It is known that  every ordinary graph with a hamiltonian circuit admits a 4-NZF (See Corollary 3.3.7 \cite{Zhang2002}).  Thus we  may  further  assume  that $(G,\sigma)$ is unbalanced. Hence, by Lemma \ref{Flow-amissible}, $G$ contains at least two negative edges. Clearly, $\langle C_0\rangle_2=(G,\sigma)$.  By Lemma~\ref{2-operation},  $(G, \sigma)$ admits a $\Z_3$-flow $\phi$ such that $E(G) - E(C_0)\subseteq \supp(\phi)$. By Lemma \ref{TH: 3-to-4}, $(G,\sigma)$ admits a  $4$-flow $f_1$ such that $E(G) - E(C_0)\subseteq \supp(\phi)\subseteq \supp(f_1)$.

Since $C_0$ is balanced, $(G,\sigma)$ has a $2$-flow $f_2$ such that  $E(C_0)=\supp(f_2)$. Note that $\supp(f_2)\cup \supp(f_1)=E(G)$. Therefore  $f=2f_1+f_2$ is an 8-NZF of $(G,\sigma)$.

\medskip \noindent
{\bf Case 2.}  $C_0$ is unbalanced.

Since   $C_0$ is unbalanced,  for each edge $e\not \in E(C_0)$, there is a balanced circuit in $C_0+ e$ containing $e$, denoted by $C_e$.
Let $H = \bigtriangleup_{e \notin E(C_0)} C_e$.  Then $H$ admits a $\Z_2$-NZF and  has an even number of negative edges.

If $H$ doesn't contain an unbalanced circuit, then   we may assume that $E(H)$ are all positive  with some switching operations. Thus $E_N(G) \subseteq E(C_0)$ and $(G,\sigma)$ has  a $2$-flow $f_3$ such that  $\supp(f_3) = E(H)$.   By Lemma \ref{HC-cover}, there exists a 4-flow $f_4$ such that $E(C_0)\subseteq \supp(f_4)$.  Since $E(C_0) \cup E(H) = E(G)$, $f_3+2f_4$ is an 8-NZF of $(G,\sigma)$.

Now assume that $H$ contains an unbalanced circuit, say $C_0'$.   Since $H$  admits a $\Z_2$-NZF and  has an even number of negative edges,  by Lemma~\ref{TH: 2-to-3}, $(G,\sigma)$ has a $3$-flow $f_5$  such that $E_{f_5=\pm1}=E(H)$ and $E_{f_5 = \pm 2} \subseteq E(C_0) - E(H)$.

Let $H' = C_0\bigtriangleup C_0'$. Then $H'$ admits a $\Z_2$-NZF and  has an even number of negative edges. Since $C_0\cup C_0'$ is connected, by Lemma~\ref{TH: 2-to-3} again, $(G,\sigma)$ has  a  $3$-flow $f_6$ such that $\supp(f_6) \subseteq E(C_0)\cup E(C_0')$,   $E_{f_6=\pm1}=E(H')$, and $E_{f_6 = \pm 2} \subseteq E(C_0)\cap E(C_0')$.

 Therefore, $3f_5+f_6$ is a $9$-NZF of $(G,\sigma)$.  Since $E_{f_5 = \pm 2} \cap E_{f_6 = \pm 2} = \emptyset$,  $|(3f_5+f_6)(e)| \not = 8$ for each edge $e \in E(G)$.  Thus, $3f_5+f_6$ is  indeed an $8$-NZF of $(G,\sigma)$.
\end{proof}


\begin{thebibliography}{s2}

\bibitem{BM2008}
J.A. Bondy, U.S.R. Murty, Graph Theory, Springer, New York, 2008.

\bibitem{Bouchet1983}
A. Bouchet, Nowhere-zero integral flows on bidirected graph, {\JCTB}~34 (1983) 279--292.

\bibitem{Diestel2010} R. Diestel,  Graph Theory, Fourth edn. Springer-Verlag (2010).









\bibitem{CLLZ2018}
J. Cheng, Y. Lu, R. Luo and C.-Q. Zhang, Signed graphs: from modulo flows to integer-valued flows, \SIAMDM~32 (2018)  956-965.

\bibitem{DLLLZZ}
M. DeVos, J. Li, Y. Lu, R. Luo, C.-Q. Zhang,  and Z. Zhang, Flows on flow-admissible signed graphs, \JCTB~149 (2021) 198-221.

\bibitem{Mac-steffen2015} E.~Ma\v{c}ajova  and M.~\v{S}koviera, Remarks on nowhere-zero flows in signed cubic graphs, \DM~338 (2015) 809-815.

\bibitem{MS-eulerian} E.~Ma\v{c}ajova  and M.~\v{S}koviera,  Nowhere-Zero Flows on Signed Eulerian Graphs, \SIAMDM~31 (2017)  1937-1952.


\bibitem{Schubert2015}
M.~Schubert and E.~Steffen, Nowhere-zero flows on signed regular graphs, \EJC~{48} (2015) 34-47.



\bibitem{West1996} D.B. West, Introduction to Graph Theory, Upper Saddle River, NJ: Prentice Hall, (1996).


\bibitem{Zas1982}  T. Zaslavsky, Signed graphs, \DAM~4(1982) 47-74.


\bibitem{Xu2005} R. Xu, C.-Q. Zhang, On flows in bidirected graphs, \DM~299 (2005) 335--343.

\bibitem{Zhang2002}
C.-Q.~Zhang, Circular flows of nearly eulerian graphs and vertex-splitting, \JGT~{40} (2002) 147-161.

\bibitem{Zyka1987} O. Z\'{y}ka, Nowhere-zero 30-flow on bidirected graphs, Thesis, Charles University, Praha, KAM-DIMATIA Series 87-26, 1987.
\end{thebibliography}
\end{document}